\documentclass[11pt]{amsart}

\scrollmode

\usepackage{times}
\usepackage{latexsym,bm}
\usepackage{amsmath,graphicx}
\usepackage{latexsym}
\usepackage{amscd,amsthm,amssymb}
\usepackage[all]{xy}

\setcounter{tocdepth}{1}

\newtheorem{thm}{Theorem}[section]
\newtheorem{prop}[thm]{Proposition}
\newtheorem{lem}[thm]{Lemma}
\newtheorem{cor}[thm]{Corollary}

\theoremstyle{definition}

\newtheorem{defn}[thm]{Definition}
\newtheorem{rem}[thm]{Remark}

\numberwithin{equation}{section}

\title[The minimal genus problem]{The minimal genus problem for elliptic surfaces}
\author{M.~J.~D.~Hamilton}
\address{      Institute for Geometry and Topology\\
               University of Stuttgart\\
               Pfaffenwaldring 57\\
               70569 Stuttgart\\
               Germany}
\email{mark.hamilton@math.lmu.de}
\date{\today}
\subjclass[2010]{Primary 14J27, 57R95; Secondary 57R57}
\keywords{4-manifold, elliptic surface, diffeomorphism group, minimal genus}

\begin{document}

\begin{abstract}We solve a certain case of the minimal genus problem for embedded surfaces in elliptic 4-manifolds. The proofs involve a restricted transitivity property of the action of the orientation preserving diffeomorphism group on the second homology. In the case we consider we get the minimal possible genus allowed by the adjunction inequality.
\end{abstract}

\maketitle

\section{Introduction}

In their classical work from 1961, Kervaire and Milnor \cite{KM} showed that certain second homology classes in simply-connected 4-manifolds are not represented by embedded spheres. It therefore became an interesting question to find for a given homology class in a 4-manifold the minimal genus of an embedded closed connected oriented surface realizing that class. This question has been solved at least partly for rational and ruled surfaces and for 4-manifolds with a free circle action \cite{FV1, FV2, La, Li, LiLi1, LiLi2, LiLi3, LiLi4, R}. On symplectic 4-manifolds the question is related to the Thom conjecture \cite{KrMr, MSzT, OzSz}. In general, the adjunction inequality from Seiberg-Witten theory gives a lower bound on the genus of a surface representing a homology class in a closed oriented 4-manifold with a basic class and we can then ask if this lower bound is indeed realized. Usually the question is more tractable for classes of positive self-intersection and is still open in most situations in the case of negative self-intersections. In particular, it is still unknown whether there exist embedded spheres in the $K3$ surface of arbitrarily negative self-intersection. 

An interesting class of 4-manifolds are elliptic surfaces. We will restrict to minimal simply-connected elliptic surfaces with $b_2^+> 1$, but generalizations should be possible. It is natural to consider these 4-manifolds, because they are the second case in the Enriques-Kodaira classification of minimal simply-connected complex surfaces after the complex projective plane and the Hirzebruch surfaces, for which the minimal genus problem has already been solved \cite{KrMr, R}. The remaining third case are surfaces of general type. 

Note that every orientation preserving self-diffeomorphism of a closed oriented 4-manifold induces an isometry of the intersection form on the second homology (modulo torsion). A very useful fact is that for elliptic surfaces the image of the orientation preserving diffeomorphism group in the orthogonal group of the intersection form is known. This is due to Borcea, Donaldson and Matumoto \cite{B, D, Mat} for the $K3$ surface and to Friedman-Morgan and L\"onne in the general case \cite{FM, Lo}. We will combine this knowledge with the work of Wall on the transitivity of the orthogonal groups of unimodular quadratic forms \cite{W}. Similar to the case of rational surfaces, this will allow us to reduce the problem of representing a homology class by a minimal genus surface to certain special classes. We cannot treat the minimal genus problem in full generality. Instead we will concentrate on the first interesting special cases that come to mind. To state one of the results, we will prove the following in the special case that the elliptic surface has no multiple fibres, i.e.~is given by a surface $E(n)$ with $n\geq 2$:
\begin{thm} Let $X$ be an elliptic surface diffeomorphic to $E(n)$ with $n\geq 2$. Suppose $A$ is a non-zero class in $H_2(X;\mathbb{Z})$ orthogonal to the canonical class $K$ and of self-intersection $A^2=2c-2$ with $c\geq 0$. Then $A$ is represented by a surface of genus $c$ in $X$. This is the minimal possible genus.
\end{thm}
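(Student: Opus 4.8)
The plan is to obtain the lower bound from Seiberg--Witten theory and the upper bound by reducing, via the restricted transitivity of the diffeomorphism action, to a handful of explicit classes that can be realized by surfaces assembled from tori and spheres living in the elliptic fibration. For the lower bound: $X$ is a minimal elliptic surface with $b_2^+>1$, so $K=K_X$ is a Seiberg--Witten basic class. When $c\ge 1$ we have $A^2=2c-2\ge 0$, and the adjunction inequality for $K$ gives $2g(\Sigma)-2\ge A^2+|K\cdot A|=A^2=2c-2$ for every embedded surface $\Sigma$ representing $A$; hence $g(\Sigma)\ge c$. When $c=0$ there is nothing to prove.

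For the upper bound I would first invoke the restricted transitivity property to move $A$, by a diffeomorphism of $X$, to a convenient class of the same self-intersection, the same divisibility and the same pairing with $K$; since $K=(n-2)f$ for the fibre class $f$, the hypothesis $A\perp K$ just means $A\perp f$ (for $n\ge 3$; for $n=2$ one has $K=0$). Write $A=e\,A_0$ with $A_0$ primitive and $e=\operatorname{div}(A)$. If $A^2=-2$ (so $c=0$, $e=1$) then $A_0$ is moved to the class of an embedded $(-2)$-sphere orthogonal to $K$, for instance a component of a reducible fibre, and we are done. If $A^2=0$ (so $c=1$), then $A_0$ is a primitive isotropic class orthogonal to $K$; such a class is represented by an embedded torus $T_0$ — it is either $\pm f$, or it is realized inside $X$ using the fibre of a second elliptic fibration on an embedded $K3$ together with a cut-and-paste — and $A=e\,[T_0]$ is then represented by an embedded torus obtained as an $e$-fold cyclic cover of $T_0$ inside its trivial tubular neighbourhood $T_0\times D^2$.

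Now suppose $A^2=2c-2>0$. In a $K3$ embedded in $X$ I would produce two embedded tori $T_a,T_b$, orthogonal to $K$, with $[T_a]^2=[T_b]^2=0$, $[T_a]\cdot[T_b]=1$, meeting transversally in a single point: take $T_a$ a smooth fibre of a second elliptic fibration on the $K3$ and $[T_b]=[T_a]+[\sigma']$ for a section $\sigma'$ of that fibration, the latter realized by resolving $\sigma'\cup T_a'$ with $T_a'$ a parallel fibre. Transitivity reduces $A_0$ to the combination $[T_a]+m[T_b]$ with $m=\tfrac12 A_0^2\ge 1$ (possibly after also absorbing an $f$-term into one of the tori), which is represented by one copy of $T_a$ together with $m$ mutually disjoint parallel copies of $T_b$ with all $m$ intersection points resolved; by the Euler-characteristic count $g=\sum g_i+\iota-(k-1)$ this surface has genus $m+1=\tfrac12 A_0^2+1$, the adjunction minimum for $A_0$. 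Finally $A=e\,A_0$ is represented by $e$ parallel copies of this surface with all $\binom e2 A_0^2$ intersections resolved, of genus $e^2(\tfrac12 A_0^2)+1=\tfrac12 A^2+1=c$, matching the lower bound.

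The substantive work is the upper bound, and the main obstacle is twofold. First, establishing the restricted transitivity itself, by combining the known description of the image of $\mathrm{Diff}^+(X)$ in the orthogonal group with Wall's transitivity results. Second, on the geometric side, realizing the isotropic standard classes orthogonal to $K$ — and their multiples — by \emph{embedded} tori while keeping all geometric intersection numbers minimal, so that the Euler-characteristic bookkeeping yields exactly the adjunction-minimal genus and not something larger. Tracking how the divisibility of $A$ interacts with its $f$-component in the reduction step is precisely the place where the transitivity must be used in its restricted form.
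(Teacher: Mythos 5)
Your overall skeleton (adjunction inequality for the lower bound, transitivity to reduce to standard classes, explicit tori plus resolution of intersection points and an Euler-characteristic count for the upper bound) matches the paper, and your genus bookkeeping for the part of the class orthogonal to the fibre is correct. But there is a genuine gap exactly where you write ``possibly after also absorbing an $f$-term into one of the tori'': the transitivity you invoke does not exist in the strength you need. The image of $\text{Diff}^+(X)$ in the orthogonal group preserves the primitive fibre class $k$ (up to sign), so the divisibility of the image of $A$ in $k^{\perp}/\langle k\rangle$ is an invariant of the action. Writing $A=\alpha k+B$ with $B$ in the summand $lH\oplus m(-E_8)$ complementary to the span of $k$ and a dual class, the class $A$ can be moved to $[T_a]+m[T_b]$ (or to any class with vanishing $f$-component and primitive projection) only if that invariant equals $1$; for instance $A=k+2R+2T$ is primitive, orthogonal to $K$, of square $8$, but is not equivalent under any diffeomorphism to $R+4T$. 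So ``same square, same divisibility, same pairing with $K$'' is not a complete set of invariants for $n\geq 3$, and your reduction step fails for every class with $\alpha\neq 0$ and $\operatorname{div}(B)>1$. The same issue is glossed over in your $c=0$ case (the paper needs an explicit spinor-norm-$+1$ isometry sending $\alpha k+S$ to $S$) and in your $c=1$ case (``cut-and-paste'' is not an argument for realizing $\alpha f+dR$ by a torus).

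The paper closes exactly this gap with a geometric device absent from your proposal: the circle sum of Li--Li. It reduces $A$ only to $A'=\alpha F+\gamma R+\delta T$, keeping the fibre component, realizes $\gamma R+\delta T$ by a genus-$(\gamma\delta+1)=c$ surface by smoothing intersections of parallel rim tori much as you do, and then attaches a torus representing $\alpha F$ by a circle sum along an explicitly framed annulus joining the fibre to a rim torus in the neck of the fibre-sum decomposition; the circle sum adds homology classes while the genera add with a loss of one, so the genus stays $c$. Without this (or some other construction realizing $\alpha F+\gamma R+\delta T$ directly), the upper bound is not established. A secondary, fixable imprecision: there is no closed $K3$ embedded in $E(n)$ for $n\geq 3$; your tori $T_a,T_b$ should be the rim torus and the smoothing of its union with the vanishing sphere inside an embedded Gompf nucleus $N(2)$.
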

Note that the self-intersection number of classes orthogonal to the canonical class is always even, because the canonical class is characteristic. There is a similar, slightly more restrictive theorem in the case of elliptic surfaces with multiple fibres. We are also interested if we can realize homology classes by surfaces that are contained in certain nice neighbourhoods inside the elliptic surface, given by copies of an embedded Gompf nucleus $N(2)$.

\subsection*{Notations} In the following, $X$ will denote a minimal simply-connected elliptic surface with the complex orientation. By an elliptic surface we always mean a surface of this kind. Using the classification of elliptic surfaces \cite{GS}, $X$ is diffeomorphic to $E(n)_{p,q}$, where the coprime indices denote logarithmic transformations. We restrict to the case $n\geq 2$ or equivalently $b_2^+>1$; see \cite{La} for a discussion of Dolgachev surfaces $E(1)_{p,q}$ with $b_2^+=1$. All self-diffeomorphisms of $X$ are orientation preserving. We often denote a closed oriented surface in $X$ and the homology class it represents by the same symbol.

\section{Action of the diffeomorphism group}

Let $H_2(X)$ denote the integral second homology of $X$ and $\text{Diff}^+(X)$ the group of orientation preserving self-diffeomorphisms of $X$. The intersection form on second homology induces a unimodular quadratic form on the lattice $H_2(X)$. We denote by $O$ the orthogonal group of all automorphisms of $H_2(X)$ that preserve the intersection form. The elements of this group are called automorphisms of the intersection form. The action of diffeomorphisms on homology defines a group homomorphism $\text{Diff}^+(X)\rightarrow O$. 

There is a homomorphism $O\rightarrow\mathbb{Z}_2$, called the {\em spinor norm}, which is defined as follows. We can choose an orientation on all maximal positive definite linear subspaces of $H_2(X;\mathbb{R})$, cf.~\cite{Ray}: Fix any such subspace $U_0$ and let $\pi\colon H_2(X;\mathbb{R})\rightarrow U_0$ denote the orthogonal projection. The restriction of $\pi$ to any maximal positive definite subspace $U$ is an isomorphism with $U_0$. Choosing an orientation for $U_0$ we get an orientation for all maximal positive definite subspaces $U$ via $\pi$. This orientation varies continuously with $U$. The spinor norm of an element $\phi\in O$ is defined to be $\pm 1$ depending on whether $\phi$ preserves or reverses the orientation when mapping a maximal positive definite subspace of $H_2(X;\mathbb{R})$ to another one. A deformation argument shows that this does not depend on the choice of such a subspace. The subgroup of $O$ of elements of spinor norm $1$ is denoted by $O'$. 
\begin{defn}
We let $K$ denote the canonical class of $X$, which is minus the first Chern class. If $X$ is not the $K3$ surface and hence $K\neq 0$, let $k$ denote the Poincar\'e dual of $K$ divided by its divisibility. If $X$ is the $K3$ surface, let $k$ denote the class of a general fibre. In any case, $k$ is a primitive homology class of self-intersection zero. We also choose a second homology class $V$ such that $k\cdot V=1$. For example if $X$ has no multiple fibres we can choose for $V$ a section of an elliptic fibration. We denote by $O_k$ the automorphisms of the intersection form fixing $k$ and by $O_k'$ those of spinor norm $1$.   
\end{defn}
The next statement follows from Theorem 8 in \cite{Lo} due to L\"onne.
\begin{thm}[L\"onne]\label{thm Lonne}
The image of the diffeomorphism group $\text{Diff}^+(X)$ in $O$ is equal to $O'$ for the $K3$ surface and contains $O_k'$ for all other elliptic surfaces $X$.
\end{thm}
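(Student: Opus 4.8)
The plan is to deduce the theorem from L\"onne's explicit computation of the image of the mapping class group (Theorem 8 of \cite{Lo}), supplemented in the $K3$ case by the results of Borcea, Donaldson and Matumoto \cite{B, D, Mat}. First I would record precisely which inclusions must be established. For the $K3$ surface both directions are needed. The containment $\mathrm{im}(\text{Diff}^+(X))\subseteq O'$ is the assertion that an orientation preserving diffeomorphism preserves the chosen orientation of the maximal positive definite subspaces of $H_2(X;\mathbb{R})$; by the very definition of the spinor norm via the projection $\pi$ this is exactly the spinor norm $1$ condition, and this upper bound is the gauge-theoretic theorem of Donaldson. For every other elliptic surface only the lower bound $\mathrm{im}(\text{Diff}^+(X))\supseteq O_k'$ is claimed, so there it suffices to realize each element of $O_k'$ by a diffeomorphism; no separate fixing-$k$ or upper-bound argument is required for that direction.

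\emph{The $K3$ case.} Here $K=0$ and the claim is that $\mathrm{im}(\text{Diff}^+(X))=O'$. Having disposed of the upper bound, I would invoke the surjectivity result of Borcea and Matumoto \cite{B, Mat}, which rests on Donaldson's gauge theory and on the global Torelli theorem: every Hodge isometry of the $K3$ lattice is induced by a holomorphic automorphism after a suitable marking, and a hyperk\"ahler rotation combined with a deformation and gluing argument upgrades this to the statement that every isometry of spinor norm $1$ is realized by an orientation preserving diffeomorphism. The only compatibility to check is that the orientation-of-positive-subspaces condition employed there agrees with the spinor norm as defined in this paper, which is immediate from the construction of the orientation through $\pi$.

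\emph{The general case.} For $X$ not diffeomorphic to $K3$ I would use L\"onne's Theorem 8, which presents $\mathrm{im}(\text{Diff}^+(X))$ as an explicit subgroup $\Gamma\subseteq O$ generated by monodromy transformations of a degeneration of the elliptic fibration; these generators fix the class $k$ and have spinor norm $1$, so $\Gamma\subseteq O_k'$. The substantive step is the reverse inclusion $O_k'\subseteq\Gamma$. To obtain it I would split $H_2(X)=\langle k,V\rangle\oplus L$, with $\langle k,V\rangle$ a unimodular hyperbolic plane and $L$ its orthogonal complement, and produce a generating set for $O_k'$ from Wall's structure theorems for the orthogonal groups of indefinite unimodular forms \cite{W}: such a group is generated by the isometries supported on $L$ together with the Eichler transvections along $k$, with the spinor norm $1$ condition constraining this set in a controlled fashion. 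Matching each of these generators against L\"onne's monodromy list then yields $O_k'\subseteq\Gamma$. This last matching is the only genuinely nontrivial part of the argument: translating L\"onne's geometric generators, phrased in terms of vanishing cycles, into the lattice-theoretic generators coming from Wall, while keeping careful track of the spinor norm, is where the real work lies, whereas the $K3$ case and both upper bounds reduce essentially to citations.
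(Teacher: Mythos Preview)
The paper offers essentially no proof of this statement: it is recorded as a direct consequence of Theorem~8 in L\"onne's paper \cite{Lo}, with the $K3$ case attributed in the introduction to Borcea, Donaldson and Matumoto \cite{B,D,Mat}. That is the entire argument --- a citation.

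Your proposal goes far beyond this. You treat L\"onne's Theorem~8 as giving only an explicit subgroup $\Gamma$ generated by geometric monodromies, and then propose as the ``substantive step'' to identify $\Gamma$ with $O_k'$ by producing a generating set for $O_k'$ from Wall's lattice theory and matching it against L\"onne's list. But that identification is precisely what L\"onne carries out in \cite{Lo}; the paper under review is simply quoting his conclusion, not rederiving it. So your ``only genuinely nontrivial part'' is not a gap to be filled here but the content of the cited reference. In that sense your plan is not wrong, but it misreads what is being asked: the theorem is an input to the present paper, not a result proved within it.

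One minor inaccuracy in your sketch: you describe $\langle k,V\rangle$ as ``a unimodular hyperbolic plane''. As the paper observes immediately after this theorem, $V^2$ may be odd (the non-spin case), in which case the form on the span of $k$ and $V$ is $H'\cong\langle +1\rangle\oplus\langle -1\rangle$, not the even hyperbolic form $H$. This does not affect the overall strategy, but the splitting you would actually use is the one through $W=V-bk$ that the paper sets up.
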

We now consider integral unimodular quadratic forms in general. We let $H$ denote the even hyperbolic form of rank $2$ and $E_8$ the standard positive definite even form of rank $8$. A {\em standard basis} for $H$ is a basis $e,f$ such that 
\begin{equation*}
e^2=0, f^2=0, e\cdot f=1.
\end{equation*}
Let $Q$ denote the quadratic form $Q=lH\oplus m(-E_8)$ with $l\geq 2$ and $m\in\mathbb{Z}$. In Theorem 6 in \cite{W}, Wall proved the following.
\begin{thm}[Wall]
The orthogonal group of $Q$ acts transitively on primitive elements of given square.
\end{thm}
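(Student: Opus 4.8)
The plan is to reduce every primitive class to one fixed model vector by a sequence of explicit isometries. Since $H$ and $E_8$ are even, $Q$ is even, so the square $d:=v\cdot v$ of a vector $v$ is automatically even; fix as model the vector $v_0:=e_1+\tfrac d2 f_1$ for a standard basis $e_1,f_1$ of the first hyperbolic summand of $Q=lH\oplus m(-E_8)$. Then $v_0$ is primitive of square $d$, and it suffices to prove that every primitive $v$ with $v^2=d$ equals $\phi(v_0)$ for some $\phi$ in the orthogonal group $O(Q)$.

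The engine of the argument is the \emph{Eichler transformation}: for an isotropic vector $u\in Q$ and a vector $w\in Q$ with $u\cdot w=0$, the map
\begin{equation*}
\psi_{u,w}(x)=x+(x\cdot u)\,w-(x\cdot w)\,u-\tfrac12(w\cdot w)(x\cdot u)\,u
\end{equation*}
is a well-defined integral isometry of $Q$. Together with the isometry of a summand $H_i$ interchanging $e_i$ and $f_i$, and with $-\mathrm{id}$, these generate all the isometries I need.

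\emph{Part 1: producing a hyperbolic plane containing $v$.} The crucial claim is that there is a primitive isotropic $u\in Q$ with $v\cdot u=1$. Since $Q$ is unimodular and $v$ is primitive, some $z$ satisfies $v\cdot z=1$, and the task is to modify $z$, keeping $v\cdot z=1$, until $z^2=0$. Concretely: using Eichler transformations with respect to $e_1,f_1$ and to a basis of the second hyperbolic summand (available because $l\ge 2$), one first moves $v$ into a position with $v\cdot e_1\neq 0$, and then runs a descent of Euclidean-algorithm type on $|v\cdot e_1|$ — at each stage primitivity of $v$ supplies a pairing of $v$ with $e_2$, $f_2$ or the remaining part of the lattice that allows one to decrease $|v\cdot e_1|$ — until $v\cdot e_1=1$, so that $u=e_1$ works. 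I expect this to be the main obstacle: the hypothesis $l\ge 2$ is used here in an essential way, since with a single hyperbolic summand the descent can stall (indeed the transitivity statement fails for $l=1$), and some care is needed to turn primitivity of $v$ into the arithmetic input the descent requires.

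\emph{Part 2: splitting and the classification of indefinite unimodular lattices.} Given such a $u$, the rank-two sublattice $M:=\langle v,u\rangle$ has Gram matrix $\bigl(\begin{smallmatrix}d&1\\1&0\end{smallmatrix}\bigr)$; passing to the basis $v-\tfrac d2 u,\ u$ (legitimate because $d$ is even) exhibits $M\cong H$ with $v$ the standard square-$d$ vector $(v-\tfrac d2 u)+\tfrac d2 u$. Being unimodular, $M$ is an orthogonal direct summand, $Q=M\oplus M^\perp$, and $M^\perp$ is an even unimodular lattice of rank $2l+8|m|-2$ and signature $-8m$; since $l\ge 2$ it is indefinite, so by the classification of indefinite even unimodular lattices $M^\perp\cong (l-1)H\oplus m(-E_8)$. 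Comparing with the splitting $Q=H_1\oplus H_1^\perp$, in which $H_1^\perp\cong (l-1)H\oplus m(-E_8)$ as well, one builds an isometry of $Q$ sending $H_1$ onto $M$ via $e_1\mapsto v-\tfrac d2 u$, $f_1\mapsto u$, and $H_1^\perp$ onto $M^\perp$ by any isometry between them; this isometry carries $v_0$ to $v$, which finishes the proof.
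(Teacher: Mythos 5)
The paper does not actually prove this statement: it is quoted as Theorem~6 of Wall's paper \cite{W}, so there is no in-paper argument to compare yours against. Your proposal is essentially the standard proof of Eichler/Wall transitivity, and indeed close in spirit to Wall's own: split off a hyperbolic plane containing the given primitive vector, then identify the orthogonal complement via the classification of indefinite even unimodular forms. Part~2 of your write-up is complete and correct: $\langle v,u\rangle$ has determinant $-1$, hence is an orthogonal direct summand isomorphic to $H$ with $v$ in standard position, and its complement is even, unimodular and indefinite (here $l\ge 2$ is used a second time), hence isomorphic to $(l-1)H\oplus m(-E_8)$; the assembled isometry carries $v_0$ to $v$.

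The one place where the proposal is not yet a proof is Part~1, and you flag this yourself. What you need is that Eichler transformations move $v$ to a vector with $v\cdot e_1=1$. The arithmetic input you should make explicit is this: writing $v=ae_1+bf_1+v'$ with $v'\in\{e_1,f_1\}^{\perp}$, unimodularity of $Q$ and primitivity of $v$ give $\gcd\bigl(a,\,b,\,\{v\cdot w: w\in\{e_1,f_1\}^{\perp}\}\bigr)=1$; the transformations $\psi_{e_1,w}$ and $\psi_{f_1,w}$ with $w$ in the orthogonal complement of the first summand change $v\cdot f_1$ by $v\cdot w+\tfrac12 w^2(v\cdot e_1)$ and $v\cdot e_1$ by $v\cdot w+\tfrac12 w^2(v\cdot f_1)$ respectively, which together with the swap $e_1\leftrightarrow f_1$ drive a Euclidean algorithm down to $1$. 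Since every move alters the remaining pairings, the descent must be organized around the minimum of $|v\cdot e_1|$ over the orbit rather than a naive step-by-step reduction; the second hyperbolic summand guarantees enough isotropic vectors $w$ to keep the algorithm from stalling. This is a genuine gap in the write-up as it stands, but a fillable one, and the rest of the architecture is sound.
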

We want to deduce the following.
\begin{prop}\label{Prop subgroup spinor norm 1}
The subgroup of elements of spinor norm 1 in the orthogonal group of $Q$ acts transitively on primitive elements of given square.
\end{prop}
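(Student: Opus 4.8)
My plan is to leverage Wall's theorem (full orthogonal group is transitive on primitives of given square) and then fix up the spinor norm by composing with a single reflection that has spinor norm $-1$ while fixing the target vector. So suppose $x$ and $y$ are primitive with $x^2 = y^2$. By Wall's theorem there is $\phi$ in the orthogonal group of $Q$ with $\phi(x) = y$. If $\phi$ already has spinor norm $1$ we are done; otherwise I want to produce an element $\psi$ of spinor norm $-1$ with $\psi(y) = y$, so that $\psi \circ \phi$ has spinor norm $1$ and still sends $x$ to $y$. Thus the proposition reduces to: for each primitive $y$ in $Q = lH \oplus m(-E_8)$ with $l \geq 2$, there exists an element of the orthogonal group of spinor norm $-1$ fixing $y$.

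To build such a $\psi$, I would work in the orthogonal complement $y^\perp$, or more concretely in a suitable sublattice disjoint from $y$. The natural candidates for spinor-norm-$(-1)$ elements are reflections $R_v(w) = w - \frac{2(v\cdot w)}{v^2}v$ in vectors $v$ of negative square (over $\mathbb{R}$, reflection in a negative vector reverses orientation on the positive-definite part, hence has spinor norm $-1$; reflection in a positive vector has spinor norm $+1$). Since $l \geq 2$, the form $Q$ contains at least two hyperbolic summands, so it contains vectors of square $-2$ (e.g. $e_1 - f_1$ in a standard basis of one $H$-summand) and, more importantly, it has enough room that for any primitive $y$ I can find a negative-square vector $v$ with $v \cdot y = 0$: then $R_v$ fixes $y$ and has spinor norm $-1$. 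The existence of such a $v$ is where the hypothesis $l \geq 2$ does real work — one hyperbolic plane might be entirely "used up" by $y$, but a second one (or the $-E_8$ part if $m \neq 0$) guarantees a negative vector orthogonal to $y$.

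The one subtlety I anticipate is the case $m = 0$ and $y$ interacting nontrivially with both hyperbolic planes, or the possibility that $y^\perp$ contains no vector of square exactly $-2$ but only, say, square $-1$ is impossible since the form is even — so all negative vectors have even square, and $R_v$ for $v^2 = -2$ makes sense integrally (the coefficient $\frac{2(v\cdot w)}{v^2} = -(v \cdot w)$ is an integer). I would argue: decompose $Q = lH \oplus m(-E_8)$; since $l \geq 2$, write $Q = H \oplus (H \oplus (l-2)H \oplus m(-E_8))$ and observe that $y$, being a single vector, cannot have nonzero pairing with all of an unbounded-rank complement; more carefully, restrict attention to the sublattice $H' \oplus H''$ spanned by two hyperbolic summands — inside $y^\perp \cap (H' \oplus H'')$, which has rank at least $2$ and is nondegenerate of signature $(1,1)$ or $(0,2)$ or contains isotropic vectors, one always finds a vector of negative even square (an indefinite even lattice of rank $\geq 2$, or rank $3$, always represents $-2$).

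The main obstacle, then, is this purely lattice-theoretic existence statement: \emph{every primitive $y$ in $Q = lH \oplus m(-E_8)$, $l \geq 2$, is orthogonal to some vector $v$ with $v^2 < 0$}. I expect this to follow by a short case analysis on the rank and signature of $y^\perp$ (which has signature $(l-1, l+8m)$ or $(l, l+8m-1)$ depending on whether $y$ is isotropic, has positive, or negative square), using that an even lattice of signature $(p,q)$ with $q \geq 1$ and rank $\geq 2$, or an even definite negative lattice of rank $\geq 1$ other than the zero lattice, always contains a vector of negative square — and $y^\perp$ has negative-definite part of rank at least $l - 1 + 8m \geq 1$. Once $v$ is found, $\psi = R_v$ finishes the argument. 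I'll write this up with the reflection-in-a-negative-vector-has-spinor-norm-$(-1)$ computation stated explicitly, since that is the only place the definition of spinor norm from the previous section is actually used.
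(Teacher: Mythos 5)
Your overall strategy --- use Wall's theorem to get $\phi$ with $\phi(x)=y$ and then correct its spinor norm by composing with a spinor-norm $-1$ element fixing $y$ --- is reasonable, but your key computation is backwards for the definition of spinor norm used in this paper. Here the spinor norm records whether an isometry preserves or reverses the orientation of maximal \emph{positive} definite subspaces of $Q\otimes\mathbb{R}$. If $v^2<0$, then $v^{\perp}$ contains a maximal positive definite subspace $U$, and the reflection $R_v$ restricts to the identity on $U$; hence $R_v$ has spinor norm $+1$, not $-1$. It is reflections in \emph{positive} vectors that have spinor norm $-1$ (such a $v$ lies in a maximal positive definite subspace, on which $R_v$ acts by $-1$ on $v$ and $+1$ on a complement). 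You can sanity-check the convention against the paper's own lemma: $-\mathrm{id}$ on a single $H$ summand equals $R_{e-f}\circ R_{e+f}$ and is asserted to have spinor norm $-1$, which forces the positive vector $e+f$ to carry the $-1$. Consequently your $\psi=R_v$ with $v^2=-2$ and $v\cdot y=0$ fixes $y$ but does not change the spinor norm, and $\psi\circ\phi$ still has spinor norm $-1$; the argument collapses at exactly the step it was designed to fix.

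Even after flipping the sign --- i.e.\ seeking $v\perp y$ with $v^2=+2$ --- there is a second gap: integrality of $R_v$ requires $v^2=\pm 2$ exactly, not merely a vector whose square has the right sign, and the general statement you invoke (``an indefinite even lattice of rank $\geq 2$ always represents $-2$'') is false: the even indefinite lattice with Gram matrix $\left(\begin{smallmatrix}0&3\\3&0\end{smallmatrix}\right)$ represents only multiples of $6$. That $y^{\perp}$ in this particular $Q$ represents $2$ is plausibly true but would need a genuine proof. The paper avoids all of this with a different reduction: it exhibits one explicit pair $p=e+af$, $q=-e-af$ in the first $H$ summand together with explicit isometries of \emph{both} spinor norms carrying $p$ to $q$ (namely $-\mathrm{id}$ on one, respectively on two, $H$ summands --- this is where $l\geq 2$ is used), and then applies Wall's theorem twice, to map $x\to p$ and $q\to y$, choosing the middle factor to make the total spinor norm $+1$. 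I would either adopt that route or supply the missing representability lemma for vectors of square $+2$ in $y^{\perp}$.
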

We first prove the following lemma.
\begin{lem} For any even number $2a$ there exist primitive elements $p$ and $q$ of square $2a$ and automorphisms of $Q$ of spinor norm $+1$ and $-1$ which map $p$ to $q$.
\end{lem}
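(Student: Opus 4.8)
The plan is to make the pair $(p,q)$ and both automorphisms completely explicit, exploiting that $Q=lH\oplus m(-E_8)$ contains at least two orthogonal hyperbolic summands because $l\geq 2$. Write $Q=H_1\oplus H_2\oplus R$, where $H_1$ and $H_2$ are hyperbolic planes with standard bases $e_1,f_1$ and $e_2,f_2$ and $R=(l-2)H\oplus m(-E_8)$. Set $p=q=e_1+af_1\in H_1$. Then $p^2=(e_1+af_1)^2=2a$, and $p$ is primitive since its coordinates $1$ and $a$ are coprime; this works for every integer $a$, including $a=0$ and $a<0$. For the automorphism of spinor norm $+1$ sending $p$ to $q$ we take the identity, so the whole problem reduces to producing a single automorphism of $Q$ of spinor norm $-1$ that fixes $p$.

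For this I would reflect inside the second hyperbolic summand. Let $v=e_2+f_2\in H_2$, so that $v^2=2$, and let $R_v(x)=x-(x\cdot v)\,v$ be the associated reflection of $H_2$; it is an integral automorphism because $x\cdot v\in\mathbb{Z}$ for every lattice vector $x$, and concretely it interchanges $e_2$ with $-f_2$. Extend $R_v$ by the identity on $H_1\oplus R$ to obtain $\beta\in O$. Since $\beta$ is the identity on $H_1$ it fixes $p$. To compute its spinor norm, observe that $v$ spans a positive definite line while $v^\perp\cap H_2$ is negative definite, so a maximal positive definite subspace of $Q\otimes\mathbb{R}$ may be chosen of the form $\langle v\rangle\oplus U'$ with $U'\subset v^\perp$; on it $\beta$ negates $v$ and fixes $U'$ pointwise, hence reverses orientation. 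Therefore $\beta$ has spinor norm $-1$, and $(\mathrm{id},\beta)$ is the required pair. If one prefers $p\neq q$, it suffices to compose both automorphisms with any fixed automorphism of $Q$ of spinor norm $+1$ that moves $p$.

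I do not anticipate a serious obstacle; the step requiring the most care is matching the sign of the spinor norm of $\beta$ against the orientation convention on maximal positive definite subspaces introduced earlier, which is precisely the short computation just indicated. It is worth noting why $l\geq 2$ is genuinely used: reflections in the $(-2)$-roots of the $E_8$ summands, and the reflection in $e_1-af_1\in p^\perp\cap H_1$ when $a\geq 0$, all have spinor norm $+1$, so in general one needs a second hyperbolic plane to manufacture an element of spinor norm $-1$ in the stabilizer of $p$.
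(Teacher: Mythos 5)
Your argument is correct and follows essentially the same strategy as the paper: exhibit explicit primitive vectors in the first hyperbolic summand and explicit isometries supported on the first two hyperbolic summands, reading the spinor norm off from the action on a maximal positive definite subspace. The only (harmless) difference is that the paper takes $q=-p=-(e+af)$ together with $-\mathrm{id}$ on one resp.\ two hyperbolic planes, whereas you take $p=q$ together with the identity and the reflection in $e_2+f_2$; since the statement does not require $p\neq q$ and the application in Proposition \ref{Prop subgroup spinor norm 1} goes through verbatim with $p=q$, this variation is fine.
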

\begin{proof}
We consider $Q=lH\oplus m(-E_8)$ and let $e, f$ denote a standard basis for the first $H$ summand. Let $p=e+af$ and $q=-e-af$. Then $p$ and $q$ are primitive elements and $p^2=q^2=2a$. Consider the automorphism of $Q$ which is minus the identity on the first $H$ summand and the identity on all other summands and the automorphism which is minus the identity on the first two $H$ summands and the identity on all other summands. These automorphisms have spinor norm $-1$ and $+1$ and map $p$ to $q$.
\end{proof}
We now prove Proposition \ref{Prop subgroup spinor norm 1}.
\begin{proof}
Let $x$ and $y$ be arbitrary primitive elements of square $2a$ and let $p$ and $q$ be the elements from the lemma of the same square. By Wall's theorem there exist automorphisms in $O$ mapping $x$ to $p$ and $q$ to $y$. Choosing an automorphism that maps $p$ to $q$ of the correct spinor norm we get by composing an automorphism of spinor norm $+1$ mapping $x$ to $y$. 
\end{proof}
We now consider the elliptic surface $X$.
\begin{lem}
The self-intersection number $V^2$ is even if and only if $X$ is spin.
\end{lem}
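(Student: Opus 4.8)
The plan is to deduce both directions from the single fact that the canonical class $K=-c_1(X)$ is a characteristic element for the intersection form, together with the relation $\mathrm{PD}(K)=d\,k$, where $d\ge 1$ denotes the divisibility of $K$. Recall that $X$ is spin precisely when its intersection form is even, since $X$ is simply connected.

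For the direction ($\Leftarrow$) there is nothing to do: if $X$ is spin then its intersection form is even, so in particular $V^2$ is even.

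For the direction ($\Rightarrow$) assume $V^2$ is even. If $X$ is the $K3$ surface we are done, since $K3$ is spin; so suppose $K\neq 0$ and write $\mathrm{PD}(K)=d\,k$ with $d\ge 1$ the divisibility of $K$. Because $K$ reduces mod $2$ to $w_2(X)$, it is characteristic, i.e.\ $K\cdot x\equiv x^2\pmod 2$ for all $x\in H_2(X)$. Applying this to $x=V$ and using $k\cdot V=1$ yields $d=K\cdot V\equiv V^2\equiv 0\pmod 2$, so $d$ is even and $K=2\bigl(\tfrac{d}{2}k\bigr)$ is divisible by $2$. Since $K$ is characteristic, this forces $x^2\equiv K\cdot x\equiv 0\pmod 2$ for every $x$, so the intersection form of $X$ is even and $X$ is spin.

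I do not expect a genuine obstacle here; the argument is a one-line computation with the intersection pairing once the right fact is invoked. The only points needing a little care are to recall that the canonical class of an almost complex surface is characteristic, and to set the $K3$ case aside at the outset, where $K=0$ and the divisibility is not defined but $X$ is spin by construction (so evenness of the form gives $V^2$ even directly). One may also note in passing that the same reasoning shows the parity of $V^2$ does not depend on the choice of $V$ with $k\cdot V=1$: two such choices differ by a class orthogonal to $K$, which has even square because $K$ is characteristic.
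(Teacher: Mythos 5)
Your proof is correct, but it takes a different route from the paper's. You work entirely with the divisibility $d$ of $K$: the characteristic property gives $d=K\cdot V\equiv V^2\pmod 2$, and $d$ even forces $K\equiv 0\pmod 2$, hence the whole form is even. The paper instead splits $H_2(X)$ as the orthogonal direct sum of the unimodular sublattice spanned by $k$ and $V$ and its complement, observes that the complement is automatically even (again because $K$ is characteristic and proportional to $k$), and reduces the parity of the form to the parity of $V^2$ on the rank-two piece. The two arguments use the same key fact, but the paper's decomposition is not just a proof device: it is reused immediately afterwards to define $W=V-bk$ and to identify the first summand of $Q_X$ as $H$ or $H'$, so the lemma and its proof set up the structure of equation \eqref{eqn int form}. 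Your version is marginally more self-contained and handles the $K3$ case cleanly by setting it aside at the start. One small imprecision in your closing aside: two choices of $V$ differ by a class orthogonal to $k$ (not merely to $K$, which is vacuous for $K3$); the conclusion that the parity of $V^2$ is independent of the choice still holds, for $K3$ because the form is even outright.
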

\begin{proof}
The intersection form on the span of $k$ and $V$ is unimodular, hence it is unimodular on the orthogonal complement. The intersection form on this complement is even, since the canonical class $K$ is characteristic. The claim now follows because $X$ is spin if and only if the intersection form on both summands is even. 
\end{proof}
Let $V^2=2b$ in the spin case and $V^2=2b+1$ in the non-spin case. 
\begin{defn}
Define a homology class $W=V-bk$. Then the intersection form on the span of $k$ and $W$ is the form $H$ in the spin case and the form $H'$ given by
\begin{equation*}
H'=\left(\begin{array}{cc} 0 & 1 \\ 1 & 1 \end{array}\right)
\end{equation*}
in the non-spin case. Note that $H'$ is isomorphic to $\langle +1\rangle \oplus \langle -1 \rangle$.
\end{defn}
The complete intersection form of $X$ is then given by 
\begin{equation}\label{eqn int form}
Q_X=H\oplus lH\oplus m(-E_8)\text{ or } H'\oplus lH\oplus m(-E_8),
\end{equation}
depending on whether $X$ is spin or non-spin, where $l\geq 2$ since $b_2^+\geq 3$ and $m>0$. We also want to choose a standard basis for the second $H$ summand in the intersection form. We first consider the case of the $K3$ surface: It is known that the $K3$ surface contains a rim torus $R$ of self-intersection zero and a vanishing sphere $S$ of self-intersection $-2$ such that $R$ and $S$ intersect transversely in one positive point; see page 73 in \cite{GS}. Both arise from the fibre sum construction $K3=E(1)\#_{F=F}E(1)$ along a general fibre $F$, given by
\begin{equation*}
K3=(E(1)\setminus\text{int}\,\nu F)\cup_\psi(E(1)\setminus\text{int}\,\nu F), 
\end{equation*}
with fibred tubular neighbourhood $\nu F\cong S^1\times S^1\times D^2$ and gluing diffeomorphism $\psi$ on the boundary of the tubular neighbourhood. The gluing diffeomorphism preserves the splitting and is given by the identity on the torus and complex conjugation on $\partial D^2$. The rim torus in this construction is given by
\begin{equation*}
R=S^1\times \{\ast\}\times \partial D^2\subset \partial\nu F\subset K3.
\end{equation*}
The vanishing sphere is obtained by sewing together two vanishing disks of relative self-intersection $-1$ coming from elliptic Lefschetz fibrations on $E(1)$. These vanishing disks bound the vanishing cycles
\begin{equation*}
\{\ast\}\times S^1\times\{\ast\}\subset \partial\nu F
\end{equation*}
in each copy of $E(1)\setminus\text{int}\,\nu F$, that get identified under the gluing diffeomorphism. Recall the following definition from \cite{Gnuc}:
\begin{defn} The {\em nucleus $N(2)$} is defined as the 4-manifold with boundary given by the neighbourhood of a cusp fibre and a section of self-intersection $-2$ in $K3$.
\end{defn}
The nucleus contains also a smooth torus fibre homologous to the cusp. The second homology of the nucleus is isomorphic to $\mathbb{Z}^2$ and spanned by this torus and the section. In addition to the nucleus containing a fibre and a section given by the definition, the $K3$ surface contains two other embedded copies of $N(2)$, disjoint from the first one. The rim torus $R$ and the vanishing sphere $S$ are embedded in one such copy \cite{GM} and correspond to the fibre and the section. Since this nucleus is disjoint from a general fibre it is still contained in an arbitrary elliptic surface $X$ of the type above, because the fibre sums and the logarithmic transformations resulting in the manifold $X=E(n)_{p,q}$ are performed in the complement of the nucleus. We can also choose the surface representing the class $V$ to be disjoint from this nucleus. 
\begin{defn}
Let $T$ denote the torus of self-intersection zero obtained by smoothing the intersection between $R$ and $S$. Then $T$ represents the class $R+S$ and the classes $R$ and $T$ are a standard basis for the second $H$ summand in the intersection form of the elliptic surface $X$.
\end{defn}
Using Theorem \ref{thm Lonne} and Proposition \ref{Prop subgroup spinor norm 1} we have the following:
\begin{prop}\label{prop main}
Let $X$ be an elliptic surface and $B$ an arbitrary class in the subgroup $lH\oplus m(-E_8)$ of $H_2(X)$ as in equation \eqref{eqn int form}. Then we can map $B$ to any other class in $lH\oplus m(-E_8)$ of the same square and divisibility by a self-diffeomorphism of $X$. In particular, we can map $B$ to a linear combination of the classes $R$ and $S$. This diffeomorphism is the identity on the other summand $H$ or $H'$ of $H_2(X)$, given by the span of $k$ and $W$. Suppose $X$ is the $K3$ surface. Then we can map any class in $H_2(X)$ to any other class in $H_2(X)$ of the same divisibility and square using a self-diffeomorphism of $X$.
\end{prop}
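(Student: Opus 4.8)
The plan is to manufacture the required automorphisms of the intersection form from Proposition~\ref{Prop subgroup spinor norm 1} and then realize them by diffeomorphisms via Theorem~\ref{thm Lonne}. I would first reduce to the primitive case: writing $B=dB_0$ with $B_0$ primitive, where $d$ is the divisibility of $B$, and a target class $C\in lH\oplus m(-E_8)$ with $C^2=B^2$ and divisibility $d$ as $C=dC_0$ with $C_0$ primitive, one has $B_0^2=C_0^2$. Since $Q=lH\oplus m(-E_8)$ has $l\geq 2$, Proposition~\ref{Prop subgroup spinor norm 1} supplies an automorphism $\phi$ of $Q$ of spinor norm $+1$ with $\phi(B_0)=C_0$, hence $\phi(B)=C$.

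Next I would extend $\phi$ to an automorphism $\widetilde\phi$ of the full form $Q_X$ of \eqref{eqn int form} by declaring it to be the identity on the summand $H$ (respectively $H'$) spanned by $k$ and $W$. Two points have to be checked. First, $\widetilde\phi$ fixes $k$, which is immediate. Second, $\widetilde\phi$ has spinor norm $+1$: a maximal positive definite subspace of $Q_X\otimes\mathbb{R}$ may be chosen as the direct sum of maximal positive definite subspaces of the two blocks, on which $\widetilde\phi$ acts by the identity and by $\phi$ respectively, so it preserves the product orientation, and a deformation argument as in the discussion of the spinor norm shows this is independent of the chosen subspace. Hence $\widetilde\phi$ lies in $O_k'$ for an arbitrary elliptic surface $X$, and in $O'$ when $X$ is the $K3$ surface, so by Theorem~\ref{thm Lonne} it is induced by an element of $\text{Diff}^+(X)$. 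Since $\widetilde\phi$ is the identity on the summand spanned by $k$ and $W$, the first two assertions follow.

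For the statement about $R$ and $S$, recall that $R$ and $T$ form a standard basis of the second $H$ summand, which is contained in $lH\oplus m(-E_8)$ because $l\geq 2$, and that $S=T-R$ lies in this same rank-two sublattice. As a copy of $H$ contains a primitive element of every even square, for suitable integers $a,b$ the class $aR+bS$ has the same square and divisibility as $B$, and the first assertion produces a self-diffeomorphism of $X$ carrying $B$ to it. Finally, for the $K3$ surface the whole intersection form is itself of the shape $lH\oplus m(-E_8)$, with $l=3$ and $m=2$, so Proposition~\ref{Prop subgroup spinor norm 1} applies to all of $H_2(X)$; together with the reduction to the primitive case and the fact that the image of $\text{Diff}^+(X)$ equals $O'$ for the $K3$ surface, this gives the last claim.

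The only genuine subtlety is the step of enlarging an automorphism of the $lH\oplus m(-E_8)$ block to an automorphism of the whole intersection form that is still realized by a diffeomorphism, that is, checking that extension by the identity keeps the automorphism inside $O_k'$ (respectively $O'$); this rests entirely on the multiplicativity of the spinor norm under orthogonal direct sums.
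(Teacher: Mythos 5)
Your proposal is correct and follows exactly the route the paper intends: the paper gives no written proof beyond "Using Theorem \ref{thm Lonne} and Proposition \ref{Prop subgroup spinor norm 1}," and your argument (reduce to the primitive case, apply Proposition \ref{Prop subgroup spinor norm 1} to the $lH\oplus m(-E_8)$ block, extend by the identity on the span of $k$ and $W$, check the extension lies in $O_k'$ via multiplicativity of the spinor norm, and invoke Theorem \ref{thm Lonne}) is precisely the intended filling-in of those details, including the correct handling of the $K3$ case where the whole form is $3H\oplus 2(-E_8)$.
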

The result for the $K3$ surface can also be found in \cite{KrMr0, La}. As a final preparation, we consider the following theorem on the adjunction inequality from Seiberg-Witten theory \cite{GS,KrMr,OzSz}:
\begin{thm} Let $Y$ be a closed oriented 4-manifold with $b_2^+>1$. Assume that $\Sigma$ is an embedded oriented connected surface in $Y$ of genus $g(\Sigma)$ with self-intersection $\Sigma^2\geq 0$, such that the class represented by $\Sigma$ is non-zero. Then for every Seiberg-Witten basic class $L$ we have
\begin{equation*}
2g(\Sigma)-2\geq \Sigma^2+|L\cdot\Sigma|.
\end{equation*}
If $Y$ is of simple type and $g(\Sigma)>0$, then the same inequality holds for $\Sigma\subset Y$ with arbitrary square $\Sigma^2$.
\end{thm}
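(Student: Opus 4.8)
The plan is to derive everything from the Seiberg--Witten vanishing principle: the Seiberg--Witten invariant of a spin$^c$ structure $\mathfrak{s}$ vanishes whenever $c_1(\mathfrak{s})$ violates the adjunction bound against an embedded surface. Since $-L$ is a basic class whenever $L$ is, it suffices to establish $2g(\Sigma)-2\ge\Sigma^2+L\cdot\Sigma$ under the extra hypothesis $L\cdot\Sigma\ge 0$. First I would reduce to the case $\Sigma^2=0$. If $\Sigma^2=n>0$, I would blow up $Y$ at $n$ points in the complement of $\Sigma$ to obtain $Y'=Y\#n\overline{\mathbb{CP}}^2$, with exceptional spheres $E_1,\dots,E_n$ and $b_2^+(Y')=b_2^+(Y)>1$. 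Tubing $\Sigma$ to these spheres with reversed orientation yields an embedded connected surface $\Sigma'$ of genus $g(\Sigma)$ representing $[\Sigma]-E_1-\dots-E_n$, so $(\Sigma')^2=0$; by the blow-up formula $L'=L+E_1+\dots+E_n$ is a basic class of $Y'$, and one computes $L'\cdot\Sigma'=L\cdot\Sigma+n$. Applying the square-zero case to $(\Sigma',L')$ in $Y'$ then gives $2g(\Sigma)-2=2g(\Sigma')-2\ge(\Sigma')^2+L'\cdot\Sigma'=\Sigma^2+L\cdot\Sigma$, as desired.

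So assume $\Sigma^2=0$, fix a spin$^c$ structure $\mathfrak{s}$ with $c_1(\mathfrak{s})=L$, and suppose for contradiction that $L\cdot\Sigma>2g(\Sigma)-2$. Here a closed tubular neighbourhood of $\Sigma$ is a product $\Sigma\times D^2$, so I would write $Y=M\cup_{\Sigma\times S^1}(\Sigma\times D^2)$ and stretch a long neck $\Sigma\times S^1\times[-T,T]$ into it. Letting $T\to\infty$, the standard compactness theory for Seiberg--Witten moduli spaces on manifolds with cylindrical ends produces, in the limit, a translation-invariant finite-energy solution on $\Sigma\times S^1\times\mathbb{R}$, that is, a solution of the dimensionally reduced vortex equations on $\Sigma$; a Weitzenb\"ock-type curvature estimate then forces $|\langle c_1(\mathfrak{s}),[\Sigma]\rangle|\le 2g(\Sigma)-2$ (for $g(\Sigma)=0$ one argues instead with a positive-scalar-curvature metric on $\Sigma\times D^2$, which leaves only reducible limits, and these are excluded by a generic perturbation because $b_2^+(Y)>1$). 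As this contradicts $L\cdot\Sigma>2g(\Sigma)-2$, the moduli space for $\mathfrak{s}$ is empty once $T$ is large, whence $\mathrm{SW}_Y(\mathfrak{s})=0$, contradicting that $L$ is basic. For the analytic details I would follow \cite{MSzT} (see also \cite{GS}).

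For the final assertion of the theorem I would use that a manifold of simple type satisfies $c_1(\mathfrak{s})^2=2\chi(Y)+3\sigma(Y)$ for every basic class, so that the relevant moduli spaces have expected dimension zero; this rigidity is what permits a perturbed version of the neck-stretching argument to be run on the negative disk-bundle neighbourhood of $\Sigma$ when $g(\Sigma)\ge 1$ and $\Sigma^2<0$, again forcing $\mathrm{SW}_Y(\mathfrak{s})=0$ unless $|L\cdot\Sigma|+\Sigma^2\le 2g(\Sigma)-2$; for this I would follow \cite{OzSz}. The step I expect to be the main obstacle is the analytic core of the square-zero case --- proving compactness of the moduli spaces as the neck is stretched and correctly identifying the limiting vortex solutions on $\Sigma$ --- together with the further gluing and dimension-counting required to handle $\Sigma^2<0$ in the refinement; the blow-up reduction and the bookkeeping with intersection numbers, by contrast, are routine.
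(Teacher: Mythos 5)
The paper does not prove this statement at all: it is quoted as a known theorem from Seiberg--Witten theory, with the proof delegated to the references \cite{GS,KrMr,OzSz}, so there is no internal argument to compare yours against. Your outline is a faithful sketch of the standard proof in that literature: the reduction of the case $\Sigma^2>0$ to $\Sigma^2=0$ by blowing up and tubing into the exceptional spheres (with the blow-up formula supplying the basic class $L'=L+E_1+\dots+E_n$ and the bookkeeping $L'\cdot\Sigma'=L\cdot\Sigma+\Sigma^2$ coming out correctly), the neck-stretching along $\Sigma\times S^1$ with limiting vortex solutions giving $|L\cdot\Sigma|\le 2g(\Sigma)-2$ in the square-zero case as in \cite{MSzT}, and the simple-type refinement for negative square from \cite{OzSz}. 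The genuinely hard content --- compactness of the moduli spaces under neck-stretching, identification of the limits, and the relations among invariants needed for the $\Sigma^2<0$ case --- is deferred to those references rather than proved, which is exactly what the paper itself does; as a justification for citing the theorem, your account is accurate, with only the minor imprecision that for $g(\Sigma)=0$ the positive-scalar-curvature metric should be placed on the neck $S^2\times S^1\times\mathbb{R}$ rather than on $\Sigma\times D^2$.
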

A basic class is a characteristic class in $H^2(Y;\mathbb{Z})$ with non-vanishing Seiberg-Witten invariant. If $L$ is a basic class, then $-L$ is also a basic class. The basic classes of the elliptic surfaces $X=E(n)_{p,q}$ are completely known \cite{FS}. They are given by the set
\begin{equation*}
\{rk\mid r\equiv npq-p-q\bmod 2,\, |r|\leq npq-p-q\},
\end{equation*}
where $k$ is the primitive class as above. The canonical class of the elliptic surface $E(n)_{p,q}$ is given by
\begin{equation*}
K=(npq-p-q)k.
\end{equation*}
It follows that the basic classes are certain multiples of the class $k$, where the maximal values at the end are given (up to sign) by the canonical class $K$. Hence we get:
\begin{cor}
The adjunction inequality for the elliptic surfaces $X$ reduces to the statement that
\begin{equation*}
2g(\Sigma)-2\geq \Sigma^2+|K\cdot\Sigma|
\end{equation*}
for every embedded surface $\Sigma$ of genus $g(\Sigma)$, representing a non-zero class with self-intersection $\Sigma^2\geq 0$.
\end{cor}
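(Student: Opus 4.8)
The plan is to compare the adjunction inequalities coming from the different Seiberg-Witten basic classes of $X$ and to isolate the strongest one. Fix an embedded oriented connected surface $\Sigma\subset X$ representing a non-zero class with $\Sigma^2\geq 0$. Since $X$ is diffeomorphic to $E(n)_{p,q}$ with $n\geq 2$, we have $b_2^+=2n-1>1$, so the general adjunction inequality above applies and gives $2g(\Sigma)-2\geq\Sigma^2+|L\cdot\Sigma|$ for every basic class $L$. By the description of the basic classes recalled above, every basic class has the form $L=rk$ with $r\equiv npq-p-q\bmod 2$ and $|r|\leq npq-p-q$, while $K=(npq-p-q)k$.

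The key observation is that $|L\cdot\Sigma|=|r|\,|k\cdot\Sigma|$, so among all basic classes this pairing is maximised precisely when $|r|$ attains its largest allowed value $npq-p-q$ --- which is non-negative, since $npq-p-q\geq 2pq-p-q=p(q-1)+q(p-1)\geq 0$ for $n\geq 2$ --- that is, for $L=K$ or $L=-K$, in which case $|L\cdot\Sigma|=(npq-p-q)|k\cdot\Sigma|=|K\cdot\Sigma|$. Hence the inequality $2g(\Sigma)-2\geq\Sigma^2+|K\cdot\Sigma|$ implies the adjunction inequality for every basic class, and conversely it is itself one of those inequalities, namely the one for $L=K$. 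Therefore the whole family of adjunction inequalities for $X$ is equivalent to this single inequality.

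Finally I would note that the $K3$ surface ($n=2$, $p=q=1$) is covered as well: there $npq-p-q=0$, so $K=0$ is the unique basic class, $|K\cdot\Sigma|=0$, and the displayed inequality reduces to $2g(\Sigma)-2\geq\Sigma^2$, which is exactly the general theorem in this case. There is essentially no technical obstacle here; the only point deserving a line of justification is that $\pm K$ really belongs to the set of basic classes --- i.e.\ that $npq-p-q$ is non-negative, has the right parity, and realises the maximal value of $|r|$ --- all of which is immediate from the explicit description above.
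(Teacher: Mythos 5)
Your argument is correct and is exactly the one the paper intends: the paper derives the corollary by noting that all basic classes are multiples $rk$ of $k$ with $|r|$ maximal for $L=\pm K$, so the adjunction inequality is sharpest (and hence equivalent to its instance) for $L=K$. Your additional checks that $npq-p-q\geq 0$ and that the $K3$ case degenerates correctly are fine but just make explicit what the paper leaves implicit.
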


\section{Minimal genus problem for the $K3$ surface}

The minimal genus problem for classes of non-negative square in the $K3$ surface has already been solved \cite{La}. In this section we recall this solution as a preparation for the general case. The $K3$ surface has canonical class $K=0$. Hence the adjunction inequality implies for the genus of a smooth surface $\Sigma$ that $2g(\Sigma)-2\geq\Sigma^2$ if the homology class represented by this surface is non-zero.

\begin{defn}
{\em The standard surface of genus $g$ embedded in the nucleus $N(2)$} is by definition the section of self-intersection $-2$ ($g=0$), the general fibre of self-intersection $0$ ($g=1$) or the surface of genus $g\geq 2$ and self-intersection $2g-2$ obtained by smoothing the intersection points of the section and $g$ parallel copies of the general fibre. These surfaces represent primitive homology classes.
\end{defn}
According to Proposition \ref{prop main} we can map any primitive class in the $K3$ surface via a self-diffeomorphism to any other primitive class of the same square. Hence every primitive class of self-intersection $2c-2$ with $c\geq 0$ is represented by a surface of genus $c$ inside some nucleus in $K3$. This is the minimal possible genus according to the adjunction inequality. In particular, every primitive homology class in the $K3$ surface of square zero is represented by the standard torus in a nucleus $N(2)$ inside $K3$. 

To solve the case of divisible classes with non-negative square we use Lemma 7.7 in \cite{KrMr1} due to Kronheimer-Mrowka (see also Lemma 14 in \cite{La}):
\begin{lem}[Kronheimer-Mrowka]\label{lem Lawson}
Suppose that $Y$ is a closed connected oriented 4-manifold. For an embedded surface $\Sigma$ let $a(\Sigma)=2g(\Sigma)-2-\Sigma^2$. If $h\in H_2(Y;\mathbb{Z})$ is a homology class with $h^2\geq 0$ and $\Sigma_h$ is a surface of genus $g$ representing $h$ and $g\geq 1$ when $h^2=0$, then for all $r>0$, the class $rh$ can be represented by an embedded surface $\Sigma_{rh}$ with
\begin{equation*}
a(\Sigma_{rh})=ra(\Sigma_h).
\end{equation*}
\end{lem}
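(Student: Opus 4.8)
The plan is to build $\Sigma_{rh}$ inside a tubular neighbourhood of $\Sigma_h$, using that its normal $D^2$-bundle is the disk bundle of a complex line bundle $\nu\to\Sigma_h$ of Euler number $h^2\geq 0$. Since the second homology of the neighbourhood is infinite cyclic, generated by the zero section, which maps to $h$ in $H_2(Y)$, it suffices to exhibit an embedded connected surface in the disk bundle meeting a generic fibre in $r$ points — so that it represents $rh$ and hence has square $r^2h^2$ — and with Euler characteristic $\chi=r\,\chi(\Sigma_h)-(r-1)rh^2$. Indeed then $a(\Sigma_{rh})=-\chi-r^2h^2=r(2g(\Sigma_h)-2)+(r-1)rh^2-r^2h^2=r\bigl(2g(\Sigma_h)-2-h^2\bigr)=r\,a(\Sigma_h)$.

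For $h^2>0$ I would take $\Sigma_{rh}$ to be the cyclic branched cover of $\Sigma_h$ realised in the disk bundle as the locus $\{v : v^{\otimes r}=s(\pi(v))\}$, where $s$ is a smooth section of $\nu^{\otimes r}$ (a line bundle of degree $rh^2\geq 1$) transverse to the zero section with exactly $rh^2$ simple zeros; such a section exists because $\nu^{\otimes r}\cong\mathcal{O}(p_1+\cdots+p_{rh^2})$ for suitable $p_i$, rescaled so as to map into the disk bundle. In a trivialisation near a simple zero the defining equation becomes $w^r=x$, so the locus is a smoothly embedded surface; it meets a generic fibre in $r$ points, is connected because the monodromy around each branch point (of which there is at least one) is an $r$-cycle, and has $\chi=r\,\chi(\Sigma_h)-(r-1)rh^2$ by Riemann--Hurwitz. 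This is the surface required by the first paragraph. One could instead resolve all intersection points of $r$ generic parallel copies of $\Sigma_h$, arranged to meet pairwise transversally in exactly $h^2$ points and with no triple points; an inclusion--exclusion Euler-characteristic count gives the same conclusion.

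When $h^2=0$ the bundle is trivial, there are no branch points, and the hypothesis $g:=g(\Sigma_h)\geq 1$ becomes essential. Here I would take a connected $r$-fold \emph{unbranched} cover $p\colon\widetilde\Sigma\to\Sigma_h$, which exists and may be taken regular with deck group $\mathbb{Z}/r$ because $\pi_1(\Sigma_h)$ surjects onto $\mathbb{Z}/r$ for $g\geq 1$, and realise it as an embedding $\widetilde\Sigma\hookrightarrow\Sigma_h\times D^2$ lifting $p$ with pairwise disjoint sheets. Such a lift is the same as a $\mathbb{Z}/r$-equivariant map $\widetilde\Sigma\to D^2\setminus\{0\}$, with $\mathbb{Z}/r$ acting by rotations, equivalently a map $\Sigma_h\to S^1$ pulling back the $r$-fold cover of $S^1$ to $p$; this exists because the class in $H^1(\Sigma_h;\mathbb{Z}/r)$ classifying $p$ is the mod-$r$ reduction of a class in $H^1(\Sigma_h;\mathbb{Z})=[\Sigma_h,S^1]$. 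The resulting surface represents $rh$, has square $0$ and $\chi(\widetilde\Sigma)=r\,\chi(\Sigma_h)$, so $a(\Sigma_{rh})=-\chi(\widetilde\Sigma)=r(2g-2)=r\,a(\Sigma_h)$.

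I expect the main obstacle to be the $h^2=0$ case, where the branched-cover machinery degenerates and one must produce the embedded unbranched cover — which is precisely where $g\geq 1$ enters, and which explains why the statement fails for a square-zero sphere — together with the routine but necessary checks that the branched-cover locus is a smoothly embedded surface representing $rh$ with the stated square and Euler characteristic.
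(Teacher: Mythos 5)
The paper does not prove this lemma: it is quoted verbatim from Kronheimer--Mrowka (Lemma 7.7 of \cite{KrMr1}, see also Lemma 14 of \cite{La}), and the only proof-related content in the paper is the parenthetical remark that the construction lives in a tubular neighbourhood of $\Sigma_h$ and does not use closedness of $Y$. Your argument is a correct, self-contained proof along exactly those lines, and it is essentially the standard construction from the cited sources: for $h^2>0$ the cyclic cover $\{v:v^{\otimes r}=s(\pi(v))\}$ branched over the $rh^2$ simple zeros of a section of $\nu^{\otimes r}$ (equivalently, $r$ parallel copies with intersections resolved --- your two Euler-characteristic counts agree, $r\chi(\Sigma_h)-2\binom{r}{2}h^2=r\chi(\Sigma_h)-(r-1)rh^2$), and for $h^2=0$ a connected $r$-fold unbranched cover embedded in $\Sigma_h\times S^1\subset\Sigma_h\times D^2$, which is exactly where the hypothesis $g\geq 1$ is consumed. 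The arithmetic $a(\Sigma_{rh})=-\chi-r^2h^2=r\,a(\Sigma_h)$ checks out, connectivity via the $r$-cycle monodromy is right, and the local model $w^r=x$ at a simple zero does give a smoothly embedded graph. The only point worth making explicit is the orientation convention: you should orient the branched cover so that the projection to $\Sigma_h$ is orientation-preserving off the branch locus, which makes each of the $r$ intersection points with a generic fibre positive and hence pins down the class as $rh$ rather than some other multiple. Your construction also confirms, as the paper needs for Corollary \ref{stand div N(2)}, that $\Sigma_{rh}$ can be taken inside any neighbourhood of $\Sigma_h$.
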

We can apply the construction of this lemma to divisible classes of non-negative square inside the nucleus $N(2)$ to get surfaces that represent these classes in the nucleus (the construction in the proof of this lemma works in a tubular neighbourhood of $\Sigma_h$ and does not need the assumption that $Y$ is closed). In this case $a(\Sigma_h)$ is zero, hence also $a(\Sigma_{rh})$ is zero. We have:
\begin{cor}\label{stand div N(2)}
Every non-zero class in $H_2(N(2))$, not necessarily primitive, which has self-intersection $2c-2$ with $c\geq 0$ is represented by an embedded surface of genus $c$ in $N(2)$.
\end{cor}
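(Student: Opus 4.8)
Corollary \ref{stand div N(2)} — the claim is that every nonzero class in $H_2(N(2))$ with self-intersection $2c-2$, $c\geq 0$, is represented by an embedded genus-$c$ surface in $N(2)$.

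The plan is to write the class in the standard basis of $H_2(N(2))\cong H$, reduce to a short list of cases, produce an explicit embedded genus $c$ surface in each, and handle the non‑primitive classes at the end via Lemma~\ref{lem Lawson}. Write $h=aR+bT$ with $a,b\in\mathbb{Z}$, so $h^2=2ab$ and the hypothesis $c\ge 0$ becomes $ab\ge -1$. Since reversing the orientation of a surface negates the represented class without changing the genus, it suffices, for each pair $(a,b)$, to represent either $h$ or $-h$; so one may assume $a,b\ge 0$ (hence $ab\ge 0$), or $ab=-1$. If $ab=-1$ then $h=\pm S$ is, up to orientation, the $(-2)$‑section, that is the standard surface of genus $0$, which has genus $0=c$. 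If $ab=0$ and $h\neq 0$, then $h$ is a non‑zero multiple of $R$ or of $T$; a smooth fibre of the elliptic fibration, respectively the torus $T$, is an embedded torus of square zero in $N(2)$ representing a primitive class of genus $1$, so Lemma~\ref{lem Lawson} (applicable because this representative has genus $1$, matching the hypothesis for square zero) with $r$ equal to the divisibility of $h$ yields a surface $\Sigma_h$ in $N(2)$ with $a(\Sigma_h)=0$, hence of genus $1=c$.

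The central case is $a\ge 1$ and $b\ge 1$, where $h$ need not be primitive. Here I would take $a$ disjoint parallel copies $F_1,\dots,F_a$ of a smooth torus fibre lying near the cusp fibre, and $b$ disjoint parallel copies $T_1,\dots,T_b$ of the square zero torus $T$ lying in a tubular neighbourhood $T\times D^2$. Since $[F]=[R]$ and $R\cdot T=1$, and since $T$ coincides with $S$ near the point where a smooth fibre meets $S$ while the rim torus building $T$ is disjoint from smooth fibres, one can isotope so that each $F_i$ meets each $T_j$ transversally in exactly one point and there are no further intersections; thus $\bigcup F_i\cup\bigcup T_j$ has exactly $ab$ double points. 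Smoothing all of them compatibly with orientations yields a connected embedded surface representing $\sum[F_i]+\sum[T_j]=aR+bT=h$, and its Euler characteristic starts at $(a+b)\cdot 0=0$ and drops by $2$ at each of the $ab$ smoothings, so it equals $-2ab$ and the surface has genus $ab+1=c$.

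It remains to treat a non‑primitive class $h=rh_0$ with $h_0$ primitive, $r\ge 2$, and $h^2=2c-2\ge 0$ not already covered. From $r^2h_0^2=2c-2$ one gets $h_0^2=2c_0-2$ for an integer $c_0=1+(c-1)/r^2\ge 1$ with $h_0^2\ge 0$, so by the cases above $h_0$ is represented by a genus $c_0$ surface $\Sigma_{h_0}$ in $N(2)$ with $a(\Sigma_{h_0})=2c_0-2-h_0^2=0$ (and with genus at least $1$ when $h_0^2=0$). The construction in the proof of Lemma~\ref{lem Lawson} takes place in a tubular neighbourhood of $\Sigma_{h_0}$, hence stays inside $N(2)$, and produces an embedded surface $\Sigma_h$ representing $h$ with $a(\Sigma_h)=r\cdot 0=0$, i.e.\ of genus $1+h^2/2=c$.

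I expect the central case to be the main obstacle. One might hope to move every primitive class to the standard class $(g-1)R+T$ by a self‑diffeomorphism of $N(2)$, but the isometry group of $H$ is only $\{\pm\mathrm{id},\ \pm(\text{interchange of }R\text{ and }T)\}$ and acts with several orbits on primitive vectors of a fixed square (for example $2R+3T$ and $R+6T$ both have square $12$ but lie in different orbits), so no such reduction is available; the explicit grid surface built from parallel fibres and parallel copies of $T$, together with its genus count, is genuinely needed, and the one point requiring real care is checking that these two families of tori can be placed in $N(2)$ meeting in exactly $ab$ transverse points.
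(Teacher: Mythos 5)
Your proof is correct, but it takes a genuinely different route from the paper's. The paper obtains this corollary by combining its standard surfaces (the section, the fibre, and the smoothing of the section with $g$ parallel fibres, which represent the primitive classes $S$, $R$ and $(g-1)R+T$) with Lemma~\ref{lem Lawson}, applied inside a tubular neighbourhood so as to stay in $N(2)$, to the $r$-fold multiples of exactly these classes; it does not exhibit representatives for the remaining classes of $H_2(N(2))$ such as $2R+3T$, and downstream only the multiples of the standard classes are ever needed, because Proposition~\ref{prop main} lets one move an arbitrary class of $X$ onto such a multiple before invoking the corollary. Your grid construction --- $a$ parallel fibres and $b$ parallel push-offs of $T$ smoothed at their $ab$ transverse positive intersection points --- proves the statement as literally written, for every $aR+bT$ with $a,b\geq 1$ regardless of divisibility, and you are right that no reduction inside $N(2)$ via the isometry group of $H$ (which is only $\{\pm\mathrm{id},\ \pm\,\mathrm{swap}\}$) could substitute for it. What your approach costs is the geometric verification that the two families of tori can be arranged to meet in exactly $ab$ transverse points; this is the same kind of smoothing argument the paper itself uses for its standard surfaces of genus $g\geq 2$, and your Euler characteristic count giving genus $ab+1=c$ is correct. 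Your closing paragraph on non-primitive classes is redundant --- every non-primitive class of square at least $-2$ already falls under your earlier cases, since a non-primitive class cannot have square $-2$ --- but it is harmless, and your use of Lemma~\ref{lem Lawson} for multiples of $R$ and $T$ coincides with the paper's own observation that the construction takes place in a tubular neighbourhood and hence inside $N(2)$.
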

\begin{defn}
We call the surfaces in the nucleus obtained by the construction preceding Corollary \ref{stand div N(2)} {\em standard}.
\end{defn}
The transitivity of the action of the diffeomorphism group then implies that every divisible class of non-negative square in $K3$ can also be represented by a standard surface inside a nucleus $N(2)$. Hence we get:
\begin{cor}\label{cor K3}
Consider the $K3$ surface. Every non-zero homology class of self-intersection $2c-2$ with $c\geq 0$ is represented by a surface of genus $c$. We can assume that it is embedded as the standard surface in a nucleus $N(2)$ inside $K3$. This is the minimal possible genus. 
\end{cor}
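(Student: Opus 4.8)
The plan is to treat the lower bound and the realization separately, since each piece has essentially been prepared above.

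For the lower bound, I would use that the $K3$ surface has $K=0$, so the adjunction corollary reads $2g(\Sigma)-2\geq\Sigma^2$ for every embedded surface $\Sigma$ representing a non-zero class with $\Sigma^2\geq 0$. Taking $\Sigma^2=2c-2$ this gives $g(\Sigma)\geq c$ whenever $c\geq 1$, while for $c=0$ there is nothing to check because $g(\Sigma)\geq 0$ trivially. Hence genus $c$ is the minimum as soon as one representative of that genus is produced.

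For the realization, I would work inside the embedded nucleus $N(2)\subset K3$ fixed in the construction above, whose second homology is precisely the orthogonal direct summand of $H_2(K3)$ spanned by the standard basis $R,T$ of the second $H$ summand in \eqref{eqn int form}; being a direct summand, the divisibility of a class contained in it is the same whether computed in $H_2(N(2))$ or in $H_2(K3)$. Let $\alpha\in H_2(K3)$ be non-zero with $\alpha^2=2c-2$ and divisibility $d$, and write $\alpha=d\alpha_0$ with $\alpha_0$ primitive. Since the intersection form of $K3$ is even, $\alpha_0^2=(2c-2)/d^2$ is an even integer, so $d^2\mid c-1$, say $c-1=d^2m$; then $\gamma=dR+dmT$ lies in $H_2(N(2))$, is non-zero, satisfies $\gamma^2=2d^2m=2c-2$, and has divisibility $\gcd(d,dm)=d$. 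By Corollary \ref{stand div N(2)} the class $\gamma$ is represented by a standard surface $\Sigma_\gamma$ of genus $c$ inside $N(2)$. Finally, Proposition \ref{prop main} (the $K3$ case) supplies a self-diffeomorphism $\phi$ of $K3$ with $\phi_*\gamma=\alpha$, and $\phi(\Sigma_\gamma)$ then represents $\alpha$, has genus $c$, and sits as the standard surface in the nucleus $\phi(N(2))$.

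The only point that genuinely needs to be checked is the middle step: that a class of the prescribed square and divisibility actually exists inside the distinguished nucleus. This is exactly where the evenness of the $K3$ intersection form enters, forcing $d^2\mid c-1$ and hence making $\gamma$ available. Everything else is formal — the adjunction inequality gives the bound, Corollary \ref{stand div N(2)} gives the model surface, Proposition \ref{prop main} transports it, and a diffeomorphism trivially carries a nucleus and its standard surfaces to a nucleus and its standard surfaces — so I do not expect a real obstacle; the corollary is the bookkeeping-level consequence of the preceding lemmas.
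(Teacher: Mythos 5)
Your proof is correct and follows essentially the same route as the paper: the adjunction inequality (with $K=0$) for the lower bound, Corollary \ref{stand div N(2)} for the model surface in the nucleus, and the $K3$ case of Proposition \ref{prop main} to transport it. The one detail you make explicit that the paper leaves implicit --- that evenness of the intersection form forces $d^2\mid c-1$, so a class of the prescribed square and divisibility really does exist in $H_2(N(2))$ --- is a worthwhile addition, and your verification of it is correct.
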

This result can be found in the paper \cite{La} due to Lawson, except for the observation that these surfaces of minimal genus can be realized inside an embedded nucleus $N(2)$ in the $K3$ surface.

\section{Minimal genus problem for other elliptic surfaces}\label{section min genus other elliptic}
We now consider the general case of minimal simply-connected elliptic surfaces $X$ with $b_2^+>1$. The adjunction inequality implies for surfaces $\Sigma$ orthogonal to $K$ again that $2g(\Sigma)-2\geq \Sigma^2$. The self-intersection of such a surface is even, because the canonical class is characteristic. Using Proposition \ref{prop main} and Corollary \ref{stand div N(2)} we get:
\begin{cor}
Let $X$ be an elliptic surface. Then every non-zero homology class $A$ of self-intersection $2c-2$ with $c\geq 0$ that is orthogonal to the classes $K$ and $V$ is represented by a surface of genus $c$. We can assume that it is embedded as the standard surface in a nucleus $N(2)$ inside the 4-manifold $X$. This is the minimal possible genus.
\end{cor}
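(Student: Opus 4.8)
The plan is to read the statement off from Proposition \ref{prop main}, Corollary \ref{stand div N(2)}, and the adjunction inequality, so the argument is short. First I would deal with the $K3$ surface separately: there $K=0$, so a non-zero class orthogonal to $K$ and $V$ is in particular a non-zero class of self-intersection $2c-2$, and the assertion is already contained in Corollary \ref{cor K3}. Hence from now on I assume $X$ is not the $K3$ surface, so that $K\neq 0$. In that case $K$ is a non-zero multiple of the primitive class $k$, so orthogonality of $A$ to $K$ is equivalent to $A\cdot k=0$; combined with $A\cdot V=0$ and $W=V-bk$ this gives $A\cdot W=0$. Thus $A$ is orthogonal to the summand $H$ (respectively $H'$) spanned by $k$ and $W$, and therefore lies in the complementary summand $lH\oplus m(-E_8)$ of the decomposition \eqref{eqn int form}.

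Next I would invoke Proposition \ref{prop main}: there is a self-diffeomorphism $\phi$ of $X$, equal to the identity on the span of $k$ and $W$, carrying $A$ to a linear combination $A'$ of the classes $R$ and $S$ with $(A')^2=A^2=2c-2$ and the same divisibility as $A$; in particular $A'\neq 0$. Since $R$ and $S$ are the fibre and section classes of an embedded nucleus $N(2)\subset X$ (the copy disjoint from a general fibre, hence present in every $X$ of our type), $A'$ is a non-zero class in $H_2(N(2))$ of self-intersection $2c-2$ with $c\geq 0$. Corollary \ref{stand div N(2)} then represents $A'$ by an embedded standard surface $\Sigma'$ of genus $c$ in this $N(2)$. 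Pushing forward by $\phi^{-1}$, the class $A$ is represented by the standard surface $\phi^{-1}(\Sigma')$ of genus $c$ inside the nucleus $\phi^{-1}(N(2))\subset X$.

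For minimality: when $c\geq 1$ we have $A^2=2c-2\geq 0$ and $A\neq 0$, so the adjunction inequality gives $2g(\Sigma)-2\geq A^2+|K\cdot A|=A^2=2c-2$ for every embedded surface $\Sigma$ representing $A$, hence $g(\Sigma)\geq c$; and for $c=0$ the genus-zero surface just constructed is trivially of minimal genus. I do not expect a genuine obstacle, since the substantive work is already in Proposition \ref{prop main} and Corollary \ref{stand div N(2)}. The only point to watch is that orthogonality to $K$ \emph{and} to $V$ together really forces $A$ into the summand $lH\oplus m(-E_8)$; this is exactly where the hypothesis $K\neq 0$ is used, and the degenerate case $K=0$ is precisely the $K3$ surface, already settled by Corollary \ref{cor K3}.
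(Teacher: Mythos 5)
Your proposal is correct and follows essentially the same route as the paper: the paper's own (two-line) proof likewise maps $A$ by a diffeomorphism to a combination of $R$ and $S$ via Proposition \ref{prop main} and then invokes the standard surfaces in the nucleus from Corollary \ref{stand div N(2)}, with minimality from the adjunction inequality. Your additional care in checking that orthogonality to $K$ and $V$ forces $A$ into the $lH\oplus m(-E_8)$ summand, and in routing the $K3$ case through Corollary \ref{cor K3}, only makes explicit what the paper leaves implicit.
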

\begin{proof}
The assumptions imply that $A$ can be mapped via a diffeomorphism to $A'=\gamma R+\delta S$. Since $R$ and $S$ are constructed in a nucleus $N(2)$ the claim follows.
\end{proof}
\begin{rem}
If we relax the assumption and only assume that $A$ is orthogonal to $K$, it seems that the surface is in general not contained in a nucleus $N(2)$. For example the general fibre is contained in a nucleus $N(n)_{p,q}$.
\end{rem}
We can deal with the case $A^2=-2$ in a slightly more general situation:
\begin{prop}
Let $X$ be an elliptic surface. Then any homology class $A$ orthogonal to $K$ and of self-intersection $-2$ is represented by the standard sphere in a nucleus $N(2)$ in the 4-manifold $X$.
\end{prop}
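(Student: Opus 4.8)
The plan is first to settle the $K3$ surface and then to reduce the general case, by an explicit automorphism, to a class lying in the summand $lH\oplus m(-E_8)$, where Proposition \ref{prop main} directly applies. For the $K3$ surface the canonical class vanishes, so the only constraint is $A^{2}=-2$; since $-2$ is not divisible by the square of any integer $>1$, such a class $A$ is automatically primitive, and by the last sentence of Proposition \ref{prop main} it is carried by a self-diffeomorphism to the vanishing sphere $S$ in a nucleus, which is precisely the standard sphere of genus $0$. So from now on I assume $X$ is not the $K3$ surface, hence $K$ is a nonzero multiple of $k$. Writing $A=\alpha k+\beta W+B$ in a basis adapted to $Q_X=H\oplus lH\oplus m(-E_8)$ (or $H'\oplus lH\oplus m(-E_8)$), with $B\in lH\oplus m(-E_8)$: because $k^{2}=0$, $k\cdot W=1$ and $k\cdot B=0$, the condition $A\cdot K=0$ forces $\beta=A\cdot k=0$, so $A=\alpha k+B$ and $A^{2}=B^{2}=-2$. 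As above, $B$ is then a \emph{primitive} element of the unimodular lattice $lH\oplus m(-E_8)$.

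The heart of the argument is to remove the $k$-component of $A$. Since $B$ is primitive in a unimodular lattice, the functional $x\mapsto B\cdot x$ on $lH\oplus m(-E_8)$ is surjective onto $\mathbb{Z}$, so there is $\mu\in lH\oplus m(-E_8)\subset k^{\perp}$ with $B\cdot\mu=\alpha$. I would then use the Eichler--Siegel transvection attached to the isotropic class $k$ and to $\mu$,
\[
E_{k,\mu}(x)=x+(x\cdot k)\mu-(x\cdot\mu)k-\tfrac12\mu^{2}(x\cdot k)\,k .
\]
A direct computation shows $E_{k,\mu}$ is an isometry of the intersection form; it is integral because $lH\oplus m(-E_8)$ is even, it fixes $k$, and since $A\cdot k=0$ it sends $A=\alpha k+B$ to $A-\alpha k=B$. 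It has spinor norm $+1$: replacing $\mu$ by $t\mu$ in the same formula gives a continuous family $\{E_{k,t\mu}\}_{t\in[0,1]}$ of isometries of $H_2(X;\mathbb{R})$ joining the identity to $E_{k,\mu}$, and the spinor norm is constant along such a path. Hence $E_{k,\mu}\in O_k'$, so by Theorem \ref{thm Lonne} it is realized by a self-diffeomorphism of $X$. Finally $B$ is a primitive class of square $-2$ in $lH\oplus m(-E_8)$, so by Proposition \ref{prop main} a further self-diffeomorphism, which is the identity on the span of $k$ and $W$, carries $B$ to the vanishing sphere $S$; together with the rim torus $R$ this $S$ lies in an embedded nucleus $N(2)$ and is its standard sphere of genus $0$. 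Composing these diffeomorphisms and pulling back the nucleus, $A$ is represented by the standard sphere of genus $0$ in a nucleus $N(2)$ inside $X$.

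The step I expect to require the most care is verifying that the Eichler--Siegel transvection really lands in the image of $\text{Diff}^+(X)$: combining the elementary but slightly fiddly check that $E_{k,\mu}$ fixes $k$ and preserves the form with the spinor-norm computation, where the key point is deforming through the real isometries $E_{k,t\mu}$. Everything else is bookkeeping, once one notices that a class of square $-2$ orthogonal to $K$ is automatically primitive and has vanishing $W$-component, so that after killing the $k$-component it lands in the piece of the lattice controlled by Proposition \ref{prop main}.
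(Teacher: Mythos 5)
Your proof is correct and follows essentially the same route as the paper: the paper first applies Proposition \ref{prop main} to bring $A$ to $\alpha k+S$ and then kills the $k$-component with an explicit isometry fixing $k$ (which is exactly your Eichler--Siegel transvection $E_{k,\alpha R}$), whose spinor norm is computed by the same deformation-to-the-identity argument you use. You merely perform the two steps in the opposite order and with a general $\mu$ instead of $\alpha R$; the substance is identical.
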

\begin{proof}
The assumptions imply that there exists a self-diffeomorphism of $X$ mapping $A$ to
\begin{equation*}
A'=\alpha k+S,
\end{equation*}
where $S$ is the vanishing sphere. Consider the following map $\phi$ on $H_2(X)$ which on the first two summands of the intersection form is given by
\begin{align*}
k&\mapsto k\\
W&\mapsto W+\alpha R\\
R&\mapsto R\\
S&\mapsto S-\alpha k
\end{align*}
and is the identity on all other summands. It is easy to check that $\phi$ is an isometry. Letting $\alpha$ be a real number and taking $\alpha\rightarrow 0$ we see that $\phi$ has spinor norm $+1$. Hence it is an element in $O'_k$ and therefore induced by a self-diffeomorphism. It maps $A'$ to $S$. This implies the claim. 
\end{proof}
\begin{rem}
This result should be compared to the fact that every class of square $-2$ in the complement of a general fibre in $X$ is represented by an embedded sphere \cite{FM, Lo}. 
\end{rem}

We now restrict to the case of elliptic surfaces without multiple fibres, i.e.~$X=E(n)$ with $n\geq 2$, because the following arguments seem to work only in this case. The class $k$ is represented by a general fibre $F$. We also have the rim torus $R$. Proposition \ref{prop main} implies:
\begin{lem}
If $A$ is a homology class orthogonal to $K$ and of self-intersection zero, then there exists a self-diffeomorphism of $X$ that maps $A$ to
\begin{equation*}
A'=\alpha F+\gamma R.
\end{equation*}
\end{lem}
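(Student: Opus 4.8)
The plan is to reduce the statement to Proposition~\ref{prop main} by restricting $A$ to an appropriate summand of the intersection form \eqref{eqn int form}. The case $n=2$, i.e.\ $X=K3$, is immediate: by the $K3$ part of Proposition~\ref{prop main} we may map $A$ to any class of the same square and divisibility, in particular to $dR$, where $d$ is the divisibility of $A$; since $R^2=0$ this already has the required form $\alpha F+\gamma R$ with $\alpha=0$ and $\gamma=d$ (and if $A=0$ there is nothing to prove). So assume from now on that $n\geq 3$.

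First observe that for $X=E(n)$ with $n\geq 3$ the canonical class is $K=(n-2)k$, a nonzero multiple of the primitive class $k$, so that $A\cdot K=0$ is equivalent to $A\cdot k=0$. Since $k^{\perp}=\mathbb{Z}k\oplus\big(lH\oplus m(-E_8)\big)$, we may then write $A=ak+B$ with $B\in lH\oplus m(-E_8)$, and because $F=k$ this reads $A=aF+B$. Using $F^2=0$ and $F\cdot B=0$, the hypothesis $A^2=0$ forces $B^2=0$. If $B=0$ we are done with $\gamma=0$, so suppose $B\neq 0$ and let $d\geq 1$ be its divisibility. The rim torus $R$ lies in $lH\oplus m(-E_8)$ by construction, being part of the standard basis $\{R,T\}$ of its second $H$ summand, and $dR$ has square $0$ and divisibility $d$. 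Hence Proposition~\ref{prop main} provides a self-diffeomorphism $\phi$ of $X$ with $\phi(B)=dR$ that is the identity on $\langle k,W\rangle$; in particular $\phi(k)=k=F$, and therefore
\begin{equation*}
\phi(A)=aF+\phi(B)=aF+dR=A',
\end{equation*}
the desired class with $\alpha=a$ and $\gamma=d$.

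Since the argument is in the end a direct application of Proposition~\ref{prop main}, I do not expect a substantive obstacle. The points that need care are the bookkeeping showing that orthogonality to $K$ annihilates the $W$-component — which uses $n\geq 3$ and is precisely why the case $n=2$ must be handled separately — and the (already arranged) fact that the rim torus $R$ sits inside the subgroup $lH\oplus m(-E_8)$ to which Proposition~\ref{prop main} applies.
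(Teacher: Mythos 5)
Your argument is correct and is exactly the route the paper intends: the lemma is stated there with no written proof beyond the phrase that Proposition~\ref{prop main} implies it, and your decomposition $A=ak+B$ with $B\in lH\oplus m(-E_8)$ of square zero, mapped to $dR$ while fixing $k=F$, is precisely the missing detail. Your separate treatment of the $K3$ case via the last part of Proposition~\ref{prop main} is also the right way to handle $K=0$ there.
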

We want to show that $A'$ can be represented by an embedded torus. The construction involves the circle sum from \cite{LiLi4}. The idea is the following: Let $\Sigma_0$ and $\Sigma_1$ denote two disjoint connected embedded oriented surfaces in a 4-manifold $Y$. We can tube them together in the standard way to get a surface of genus $g(\Sigma_0)+g(\Sigma_1)$. Sometimes, however, we can perform a different surgery that results in a surface of smaller genus. Let $S^1_i\subset \Sigma_i$ denote embedded circles that represent non-trivial homology classes in the surfaces. In each surface we delete an annulus $S_i^1\times I$. We get two disjoint surfaces whose boundaries consist of two circles for each surface. We want to connect these circles by annuli embedded in $Y$. There are several ways to do this: One possibility is to connect the circles from the same surface. In this way we simply get back the surfaces $\Sigma_0$ and $\Sigma_1$. Another possibility is to connect the boundary circles from different surfaces. If this is possible we get an embedded connected surface of genus $g(\Sigma_0)+g(\Sigma_1)-1$ representing the class $[\Sigma_0]+[\Sigma_1]$. 

The construction works if we can find an embedded annulus $\Delta$ in $Y$ that intersects the surfaces $\Sigma_0$ and $\Sigma_1$ precisely in the circles $S^1_0$ and $S^1_1$. We also need a nowhere vanishing normal vector field along $\Delta$ that at the ends of $\Delta$ is tangential to the surfaces $\Sigma_0$ and $\Sigma_1$. The annuli connecting the four boundary circles are then constructed as normal push-offs of the annulus $\Delta$.  

\begin{lem}
There exists an embedded annulus $\Delta$ connecting the tori $F$ and $R$ that satisfies the necessary assumptions for the circle sum in \cite{LiLi4}.
\end{lem}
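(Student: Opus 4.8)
The plan is to produce $\Delta$ explicitly inside the fibre sum model for $E(n)$, using the geometry of the general fibre $F$ and the rim torus $R$. Recall that $R$ arises as $S^1 \times \{\ast\} \times \partial D^2$ sitting inside the boundary $\partial \nu F \cong S^1 \times S^1 \times \partial D^2$ of a fibred tubular neighbourhood of a general fibre $F' \cong S^1 \times S^1 \times \{0\}$ homologous to $F$. The key observation is that $R$ and $F'$ already live in a common product region: in $\nu F \cong S^1 \times S^1 \times D^2$, the rim torus $R$ is the product of the first $S^1$-factor with the outer boundary circle $\partial D^2$, while $F'$ is the product of the two $S^1$-factors with the centre of $D^2$. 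So I would take the second torus to be $F'$ (which is homologous to $F$, hence represents the class we want) and connect $R$ to $F'$ by an annulus of the form $\Delta = S^1_0 \times \{\ast\} \times \rho$, where $S^1_0$ is the common first $S^1$-factor and $\rho \subset D^2$ is a radial arc running from the centre $0$ to a point on $\partial D^2$. This $\Delta$ meets $F' = S^1 \times S^1 \times \{0\}$ exactly in the circle $S^1_0 \times \{\ast\} \times \{0\}$ and meets $R = S^1 \times \{\ast'\} \times \partial D^2$ exactly in the circle $S^1_0 \times \{\ast\} \times \{\text{endpoint of }\rho\}$, after a small isotopy aligning the relevant points in the second factor; both circles are homologically non-trivial in their respective tori.

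Next I would exhibit the required nowhere-vanishing normal vector field along $\Delta$. The normal bundle of $\Delta$ in $S^1 \times S^1 \times D^2$ is spanned by the second $S^1$-direction and the angular direction in $D^2$; the vector field $\partial/\partial\theta_2$ (the unit tangent to the second $S^1$-factor) is a nowhere-vanishing normal field along $\Delta$. At the $F'$-end, $\partial/\partial\theta_2$ is tangent to $F'$; at the $R$-end, the tangent directions of $R$ are the first-$S^1$ direction and the $\partial D^2$ direction, so I need instead the field that is tangent to $R$ there — this is arranged by choosing the radial arc $\rho$ to approach $\partial D^2$ in a controlled way and by interpolating the normal field between $\partial/\partial\theta_2$ at one end and the $\partial D^2$-direction at the other end (both lie in the rank-two normal bundle of $\Delta$, and the space of nowhere-vanishing sections of a trivial rank-two bundle over an interval is connected, so the interpolation exists). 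With $\Delta$ and this normal field in hand, the hypotheses of the circle sum construction from \cite{LiLi4} recalled above are met verbatim.

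The main obstacle I anticipate is the compatibility of the normal framing at the two ends, i.e.\ checking that the interpolation of the normal vector field can indeed be taken nowhere vanishing while matching the prescribed tangency conditions at both ends of $\Delta$ simultaneously — equivalently, that there is no framing obstruction forcing a zero. This is where one must be careful about orientations: one needs the oriented circles $S^1_0$ in $F'$ and in $R$, together with the chosen normal field, to be coherently oriented so that the push-off annuli close up to a connected oriented surface representing $[F] + [R]$ rather than producing a non-orientable or disconnected result. Since the relevant bundle over the interval $\Delta \simeq S^1 \times I$ (collapsing the $S^1$ that carries no obstruction) is trivial of rank two, the obstruction vanishes and the interpolation exists; but this orientation bookkeeping, and the small isotopies needed to make $F'$ and $R$ meet $\Delta$ cleanly in the prescribed circles, is the part that requires genuine care rather than being routine. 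Everything else — that $F'$ is homologous to $F$, that the circles are essential, that $\Delta$ is embedded — is immediate from the product structure of $\nu F$.
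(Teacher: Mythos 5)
Your overall strategy---an explicit product annulus $S^1\times\{\ast\}\times(\text{arc in }D^2)$ together with an interpolated normal field matching the two tangential framings at the ends---is exactly the one the paper uses. However, there is a concrete error in where you place the fibre. You put $F'=S^1\times S^1\times\{0\}$ at the centre of the very tubular neighbourhood $\nu F$ whose rim is $R$, and run the radial arc $\rho$ from $0$ out to $\partial D^2$. But in the fibre sum $E(n)=(E(n-1)\setminus\text{int}\,\nu F)\cup_\psi(E(1)\setminus\text{int}\,\nu F)$ the open sub-neighbourhood around that central fibre is deleted: neither the torus $S^1\times S^1\times\{0\}$ nor the inner portion of $\rho$ exists in $E(n)$. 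This deletion is not incidental---it is precisely what makes $R$ homologically essential. If the full disk $D^2$ were present, the torus $S^1\times\{\ast\}\times\partial D^2$ would bound the embedded solid torus $S^1\times\{\ast\}\times D^2$ and hence be null-homologous, so it could not represent the rim torus class. In other words, the ``common product region'' containing both a fibre at the centre and the essential rim torus at the rim does not exist in $E(n)$.

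The repair is exactly the paper's choice: represent $F$ by a parallel copy $S^1\times S^1\times\{\tfrac12\}$ lying in the annular region of $\nu F$ that survives the fibre sum (the paper deletes the open radius-$\tfrac14$ neighbourhood of the central fibre), and take $\Delta=S^1\times\{\ast\}\times[\tfrac12,1]$ along the real axis. With that change the rest of your argument goes through: the paper writes the interpolation explicitly as $e=(0,(2-2t)v_F,0,(2t-1)v_R)$ on $S^1\times\{\ast\}\times\{t\}$, which is nowhere zero because the two coefficients never vanish simultaneously, so the ``framing obstruction'' you worry about is settled by a one-line formula rather than by a connectivity argument.
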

\begin{proof}
The elliptic surface $X=E(n)$ is obtained as a fibre sum of $E(n-1)$ and $E(1)$ along a general fibre. Let $S^1\times S^1\times D^2$ denote a tubular neighbourhood of the fibre in one of the summands. We think of $D^2$ as the unit disk in the complex plane and let $I$ denote the interval $[\frac {1}{2},1]$ along the real axis. In forming the fibre sum we delete the open tubular neighbourhood of radius $\frac{1}{4}$ of the general fibre in the centre of the tubular neighbourhood. The fibre $F$ in $X$ is realized as $S^1\times S^1\times \{\frac{1}{2}\}$ while the rim torus $R$ is $S^1\times \{\ast\}\times \partial D^2$.  Consider the annulus $\Delta=S^1\times \{\ast\}\times I$. It intersects the tori $F$ and $R$ precisely in the circles $S_F^1=S^1\times\{\ast\}\times \{\frac{1}{2}\}$ and $S_R^1=S^1\times\{\ast\}\times \{1\}$. The tangent bundle of $S^1\times S^1\times I\times \partial D^2$ is canonically trivial. Let $v_F$ be a unit tangent vector to $S^1$ in the point $\ast$ and $v_R$ a unit tangent vector to $\partial D^2$ in $1$. Then
\begin{equation*}
e_F=(0,v_F,0,0)\quad\text{along $S^1_F$}
\end{equation*}
and 
\begin{equation*}
e_R=(0,0,0,v_R)\quad\text{along $S^1_R$}
\end{equation*}
are framings of the circles $S_F^1$ and $S_R^1$ inside the tori. Consider the normal vector field along the annulus $\Delta$, given on $S^1\times \{\ast\}\times t$ by
\begin{equation*}
e=(0,(2-2t)v_F,0,(2t-1)v_R).
\end{equation*}
This is equal to the framings $e_F$ and $e_R$ on the boundary and is the required framing of the annulus.
\end{proof}
This construction allows us to circle sum $F$ and $R$. A similar, but easier construction allows us to circle sum $|\alpha|$ parallel copies of $F$ and $|\gamma|$ parallel copies of $R$ with a suitable orientation to get embedded tori $\Sigma_0$ and $\Sigma_1$ representing the classes $\alpha F$ and $\gamma R$. The torus $\Sigma_0$ contains as an open subset a copy of the torus $F$ with an annulus deleted, and similarly for $\Sigma_1$. Circle summing $\Sigma_0$ and $\Sigma_1$ along these subsets we get an embedded torus representing the class $\alpha F+\gamma R$. This construction proves:
\begin{thm}
Let $X$ be an elliptic surface without multiple fibres. Then any homology class $A$ orthogonal to $K$ and of self-intersection zero is represented by an embedded torus.
\end{thm}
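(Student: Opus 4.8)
The plan is to assemble the three preceding lemmas. By the first lemma there is a self-diffeomorphism of $X$ sending $A$ to $A'=\alpha F+\gamma R$, and since the property of being represented by an embedded torus is preserved under diffeomorphism it suffices to realize $A'$. Note that $F\cdot R=0$ and $F^2=R^2=0$, so $(A')^2=0$, consistent with the hypothesis; if $\alpha=0$ or $\gamma=0$ the class $A'$ is just a multiple of a single torus class, which is covered by the parallel-copy step below, so there is no need to treat it separately.

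First I would produce, for each of $F$ and $R$, an embedded torus carrying the required multiple. Work inside the product neighbourhood $S^1\times S^1\times D^2$ of the general fibre with the coordinates fixed in the second lemma. For $\alpha F$, take $|\alpha|$ parallel copies $S^1\times S^1\times\{t_i\}$ at distinct $t_i$ near $\tfrac12$, oriented according to the sign of $\alpha$; consecutive copies cobound the evident annulus $S^1\times\{\ast\}\times[t_i,t_{i+1}]$, which meets each copy in an essential circle and carries the obvious product normal framing extending the tangential framings. This is exactly the input required by the circle sum of \cite{LiLi4}, so iterating it glues the $|\alpha|$ tori into one embedded torus $\Sigma_0$ with $[\Sigma_0]=\alpha F$ (each circle sum keeps the genus equal to $1$). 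The same construction with concentric copies $S^1\times\{\ast\}\times\partial D^2_{r_j}$ at radii $r_j$ near $1$ yields an embedded torus $\Sigma_1$ with $[\Sigma_1]=\gamma R$. One arranges the $t_i$ and $r_j$ so that exactly one copy of $F$ sits at $t=\tfrac12$ and exactly one copy of $R$ sits at radius $1$, and so that $\Sigma_0$ (resp.\ $\Sigma_1$) contains, disjoint from all surgery annuli, an intact copy of $F$ (resp.\ $R$) with a small annulus removed.

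Second, I would circle sum $\Sigma_0$ and $\Sigma_1$ along these distinguished subsurfaces using the annulus $\Delta=S^1\times\{\ast\}\times I$ together with its normal framing $e$ from the third lemma. By the placement above, $\Delta$ meets $\Sigma_0$ and $\Sigma_1$ each in a single essential circle, inside the intact punctured copies of $F$ and $R$, and meets nothing else; the normal push-offs of $\Delta$ supply the four connecting annuli, and joining the boundary circles across the two surfaces produces an embedded connected surface of genus $g(\Sigma_0)+g(\Sigma_1)-1=1$, i.e.\ a torus, representing $[\Sigma_0]+[\Sigma_1]=A'$.

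The step demanding the most care is the parallel-copy construction: one must check that the $|\alpha|$ (resp.\ $|\gamma|$) circle sums can be carried out with mutually disjoint and correctly framed surgery annuli, that the chosen orientations really produce the class $\alpha F$ rather than a partial cancellation, and --- the genuinely fiddly point --- that after all of these surgeries a punctured copy of $F$ survives intact in $\Sigma_0$, positioned so that $\Delta$ can reach it without crossing any other part of $\Sigma_0\cup\Sigma_1$. Once the coordinate picture is set up as above, the framing verifications and the genus bookkeeping are routine.
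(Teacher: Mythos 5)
Your proposal is correct and follows essentially the same route as the paper: reduce $A$ to $A'=\alpha F+\gamma R$ by the transitivity lemma, build tori representing $\alpha F$ and $\gamma R$ by circle-summing parallel copies, and then circle sum the two along the framed annulus $\Delta$. If anything, you supply more detail than the paper does on the placement of the parallel copies and the disjointness of the surgery annuli, which the paper dismisses as ``a similar, but easier construction.''
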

This is clearly the minimal possible genus allowed by the adjunction inequality if the class $A$ is non-zero. The same method can be used to prove the following generalization:
\begin{thm}\label{main thm without multiple}
Let $X$ be an elliptic surface without multiple fibres. Suppose $A$ is a non-zero homology class orthogonal to $K$ such that $A^2=2c-2$ with $c\geq 0$. Then $A$ is represented by a surface of genus $c$ in $X$. This is the minimal possible genus.
\end{thm}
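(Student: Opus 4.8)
The plan is to reduce the general case to the two situations already handled and then combine them by a circle sum. First I would invoke Proposition~\ref{prop main}: since $A$ is orthogonal to $K$, it lies in the summand $lH\oplus m(-E_8)$ together with possibly a multiple of $W$; but orthogonality to $K$ forces the $W$-component to vanish only if we are careful, so more precisely I would use the freedom in Proposition~\ref{prop main} to move $A$ to a class of the form $A'=\beta W + (\text{linear combination of }R,\,S)$ with $A'$ orthogonal to $k$, hence $\beta=0$, so that $A'=\gamma R+\delta S$ after applying a diffeomorphism that is the identity on the $H$ or $H'$ summand. At this point $A'$ is a class of square $2c-2$ in the subgroup spanned by the rim torus $R$ and the vanishing sphere $S$ inside a nucleus $N(2)\subset X$.

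Next I would appeal to Corollary~\ref{stand div N(2)}: every non-zero class in $H_2(N(2))$ of self-intersection $2c-2$ with $c\geq 0$ is represented by the standard surface of genus $c$ in that nucleus. Since $\gamma R+\delta S$ is such a class, pulling the standard surface back by the inverse diffeomorphism gives an embedded surface of genus $c$ representing $A$ in $X$. For the sharpness, the Corollary to the adjunction theorem gives $2g(\Sigma)-2\geq \Sigma^2+|K\cdot\Sigma| = \Sigma^2 = 2c-2$ for any embedded surface representing the non-zero class $A$ with $A^2=2c-2\geq -2$; when $c\geq 1$ this is immediate, and the case $c=0$ (square $-2$) is covered by the earlier proposition realizing $A$ by the standard sphere. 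Hence genus $c$ is minimal.

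The one subtlety — and the place where I expect the argument to need the hypothesis ``without multiple fibres'' — is the passage from the abstract lattice statement of Proposition~\ref{prop main} to a diffeomorphism: Theorem~\ref{thm Lonne} only guarantees that the image of $\mathrm{Diff}^+(X)$ contains $O_k'$, the isometries \emph{fixing} $k$ of spinor norm $1$. So I must check that the isometry moving $A$ to $\gamma R+\delta S$ can be chosen to fix $k$ and to have spinor norm $1$. Proposition~\ref{prop main} is stated so that the diffeomorphism is the identity on the span of $k$ and $W$, hence fixes $k$; combined with Proposition~\ref{Prop subgroup spinor norm 1} applied inside the summand $lH\oplus m(-E_8)$ one gets spinor norm $1$ there, and a deformation argument (letting the relevant coefficients become real and tend to $0$, exactly as in the proof of the $A^2=-2$ case) shows the total isometry has spinor norm $+1$. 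So the hard part is really bookkeeping: verifying that the composite isometry $A\mapsto \gamma R+\delta S$ lies in $O_k'$ and is therefore realized by a genuine self-diffeomorphism of $X$, after which everything follows from Corollary~\ref{stand div N(2)} and the adjunction inequality.
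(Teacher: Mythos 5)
There is a genuine gap, and it occurs right at the start of your reduction. Writing a general class as $A=\alpha k+\beta W+B$ with $B\in lH\oplus m(-E_8)$, orthogonality to $K$ (equivalently to $k$) gives $A\cdot k=\beta=0$ — it kills the $W$-coefficient, \emph{not} the $k$-coefficient. So the normal form is $A=\alpha k+B$ with $\alpha$ unconstrained, and your step ``hence $A'=\gamma R+\delta S$'' silently discards the term $\alpha k$. This term cannot in general be removed: every isometry realized by a diffeomorphism lies in $O_k'$ and hence fixes $k$ and acts on $k^\perp/\langle k\rangle$, so the divisibility of the image of $A$ in that quotient is an invariant. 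For a concrete counterexample take $X=E(n)$ with $n\geq 3$ and $A=F+2R$: this class is primitive, orthogonal to $K=(n-2)k$, of square $0$, but its image $2\bar R$ in $k^\perp/\langle k\rangle$ has divisibility $2$, whereas every primitive square-zero class in the span of $R$ and $S$ maps to a class of divisibility $1$ there. Hence $A$ is not in the $O_k$-orbit of any class supported in the nucleus, and Corollary \ref{stand div N(2)} does not apply. What you have actually proved is the earlier corollary of Section \ref{section min genus other elliptic}, where $A$ is assumed orthogonal to both $K$ and $V$; the content of Theorem \ref{main thm without multiple} is precisely the removal of the hypothesis $A\perp V$.

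The missing idea is the Li--Li circle sum. The paper reduces $A$ to $A'=\alpha F+\gamma R+\delta T$ (with $\gamma\delta=c-1$ when $c\geq 2$), builds a genus-$c$ surface $\Sigma_1$ from copies of $R$ and $T$ inside the nucleus, a torus $\Sigma_0$ from $|\alpha|$ copies of the fibre $F$, and then joins them by a circle sum along an explicitly constructed annulus $\Delta$ running from $F$ to the rim torus $R$ through a fibred neighbourhood; the circle sum, unlike the usual tubing, does not increase the genus. This is also where the hypothesis ``without multiple fibres'' is genuinely used — it guarantees that $k$ is represented by the embedded fibre $F$ and that the annulus $\Delta$ with the required framing exists — whereas you located the role of that hypothesis in the spinor-norm bookkeeping, which is not where the difficulty lies. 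Your adjunction argument for minimality of the genus is fine.
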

\begin{proof}
The cases $c=0$ and $c=1$ have been proved above. We can assume that $c\geq 2$. The assumptions imply that there exists a self-diffeomorphism of $X$ mapping $A$ to
\begin{equation*}
A'=\alpha F+ \gamma R+ \delta T,
\end{equation*}
where $\gamma$ and $\delta$ are positive with $\gamma\delta=c-1$. We circle sum $|\alpha|$ parallel copies of $F$ with a suitable orientation to get a torus $\Sigma_0$ representing $\alpha F$. Taking circle sums of parallel copies of the tori $R$ and $T$ we get tori representing $\gamma R$ and $\delta T$ that intersect transversely in $\gamma\delta$ points. Smoothing these intersections we get a surface $\Sigma_1$ of genus $\gamma\delta +1=c$. This surface contains as an open subset a copy of the torus $R$ with an annulus and $\delta$ points deleted. We circle sum the surface $\Sigma_1$ to the torus $\Sigma_0$ to get an embedded surface of genus $c$ representing $A'$.
\end{proof}

\subsection*{Acknowledgements} I would like to thank T.~Jentsch and D.~Kotschick for reading a preliminary version of this paper and the referee for helpful comments.

\bibliographystyle{amsplain}

\bigskip
\bigskip

\end{document}